\theoremstyle{plain}
\DeclareMathAlphabet{\mathbbmsl}{U}{bbm}{m}{sl}
\def\min{\mathop{\rm min}}
\newcommand\rr{\mathbbmsl{R}}
\def\sol{\mathop{\rm SOL}}
\def\min{\mathop{\rm min}}
\def\a{\alpha}
\def\A{{\mathcal{A}}}
\def\L{{\rm LCP}}
\def\T{{\rm TCP}}
\def\R{\mathbb{R}^n}
\def\w{\bf w}
\def\B{{\mathcal{B}}}
\def\w{{\bf w}}
\def\x{{\bf{x}}}
\def\y{{\bf{y}}}
\def\q{\bf{q}}
\def\u{{\bf{u}}}
\def\z{\bf{z}}
\def\A{\mathcal{A}}
\newtheorem{theorem}{Theorem}
\newtheorem{corollary}{Corollary}
\newtheorem{example}{Example}
\newtheorem{Que}{Question}
\newtheorem{rem}{Remark}
\newtheorem{lemma}{Lemma}
\newtheorem{definition}{Definition}
\newtheorem{proposition}{Proposition}
\begin{document}\begin{center}
{\large A Criterion for ${\rm Q}$-tensors}
\end{center}
\begin{center}
{\rm SONALI SHARMA}\\
Department of Mathematics,\\
Malaviya National Institute of Technology Jaipur, Jaipur, 302017, India\\
E-mail address: Ssonali836@gmail.com\\ 
{\rm K. PALPANDI}\\
Department of Mathematics,\\
Malaviya National Institute of Technology Jaipur, Jaipur, 302017, India\\
E-mail address: kpalpandi.maths@mnit.ac.in \\
\end{center}

\begin{abstract}
A tensor ${\A}$ of order $m$ and dimension $n$ is called a ${\rm Q}$-tensor if the tensor complementarity problem has a solution for all ${\q} \in {\R}$. This means that for every vector ${\q}$, there exists a vector ${\u}$ such that ${\u} \geq {\bf 0},{\w} = {\A}{\u}^{m-1}+{\q} \geq {\bf 0},~\text{and}~ {\u}^{T}{\w} = 0$. In this paper, we prove that within the class of rank one symmetric tensors, the ${\rm Q}$-tensors are precisely the positive tensors. Additionally, for a symmetric ${\mathrm Q}$-tensor ${\A}$ with $rank({\A})=2$, we show that ${\A}$ is an ${\mathrm R}_{0}$-tensor. The idea is inspired by the recent work of Parthasarathy et al. \cite{Parthasarathy} and Sivakumar et al. \cite{Sivakumar} on ${\rm Q}$-matrices.
\end{abstract}
{\bf Keywords}: Symmetric tensors, ${\rm Q}$-tensors, ${\rm R}_{0}$-tensors, Rank one tensors, Complementarity problems.

Mathematics subject classification 2020: 15B48, 90C30, 90C33.\\
\section{Introduction}
For a given real square matrix ${\bf A}$ of order $n$ and a vector ${\q}$ in ${\R}$, the linear complementarity problem \cite{Cottle}, denoted as ${\rm LCP} ({\bf A},{\q})$, seeks to find a vector ${\u}$ in ${\R}$  that satisfies 
$${\u} \geq {\bf 0}, {\w} = {\bf A}{\u}+{\q} \geq {\bf 0}~\text{and}~{\u}^{T}{\w}=0,$$
or to show that no such vector exists. If such a vector exists for every ${\q}$ in ${\R}$, then ${\bf A}$ is called a ${\rm Q}$-matrix.  
On the other hand, if the only solution to ${\rm LCP}({\bf A},{\bf 0})$ is zero, then ${\bf A}$ is known as an ${\mathrm R}_{0}$-matrix. The class of ${\rm Q}$-matrices is significant in the study of the linear complementarity problem, as it characterizes the solvability of the ${\rm LCP}({\bf A},{\q})$. The literature has extensively investigated this class, as seen in \cite{Cottle,murty}.


Tensors, as a generalization of matrices, are gaining more and more significance in the current big data era. Tensors  are also referred as hyper matrices. Given natural numbers $m$ and $n$, a real tensor ${\A}$ of order $m$ and dimension $n$ is expressed as ${\A} = (a_{i_{1}i_{2}...i_{m}})$, where $i_{j} \in [n]$, for all $j \in [m]$. The tensor complementarity problem ($\T$), as an extension of the linear complementarity problem, was defined by Song and Qi \cite{Song}. The tensor complementarity problem, $\T({\A},{\q})$, associated with a tensor ${\A}$ of order $m$ and dimension $n$  and a vector ${\q} \in {\R}$, is to find a vector ${\u}$ such that ${\u} \geq {\bf 0},{\w} = {\A}{\u}^{m-1}+{\q} \geq {\bf 0},~\text{and}~ {\u}^{T}{\w} = 0$. Here $$({\A}{\bf u}^{m-1})_i = {\sum_{i_{2},...,i_{m}=1}^n}a_{i i_{2}...i_{m}}u_{i_{2}}...u_{i_{m}},$$ is a homogeneous polynomial of degree $m-1$. The concept of ${\rm Q}$-matrices has been extended to ${\rm Q}$-tensors by Song and Qi \cite{Yisheng}, where a tensor ${\A}$ is a ${\mathrm Q}$-tensor if $\T({\A},{\q})$ has a solution for each ${\q} \in {\R}$. ${\A}$ is said to be an ${\mathrm R}_{0}$-tensor if $\T({\A},{\bf 0})$ has zero as the only solution. Recently, many results related to the linear complementarity problem and corresponding matrix classes have been extended to the tensor complementarity problem. For more information, we refer to \cite{{Balaji},{Ge},{Mei},{Meng},{Palpandi}}. The survey articles \cite{{Qi.L 1},{Qi.L 2},{Qi. L 3}} provides a good summary of the current status of the theoretical, applied, and algorithmic research on TCP.
 Among other problems considered in \T, characterizing the class of ${\mathrm Q}$-tensors is one of the key problem. Many researchers have extended the  ${\mathrm Q}$-matrices characterization to ${\mathrm Q}$-tensors. Relevant literature  can be found in \cite{{Huang},{Yisheng}}.
 
 Recently, Sivakumar et al. \cite{Sivakumar} proved that if a rank one matrix is a ${\mathrm Q}$-matrix, then it is positive (A matrix with all  its entries being positive). Another result by Parthasarathy  et al. \cite{Parthasarathy} states that any ${\mathrm Q}$-matrix with order less than or equal to $2$ is an ${\mathrm R}_{0}$-matrix. These results have led to the question of whether similar characterizations exist for ${\mathrm Q}$-tensors in the case of \T. The main objective of this paper is to answer these questions.
 
The paper addresses two problems related to ${\mathrm Q}$-tensors. Firstly, it presents counter examples to show that for a rank one tensor, a ${\mathrm Q}$-tensor, may not necessarily be a positive tensor (A tensor having all  its entries positive). Secondly, it establishes the equivalence of ${\mathrm Q}$, ${\mathrm R}_{0}$, and positive for the class of rank one symmetric tensors. At last, we prove that if a symmetric tensor ${\A}$ of order $m$ and dimension $2$ has rank $2$ and is a ${\mathrm Q}$-tensor, then it is also an ${\mathrm R}_{0}$-tensor.\\
Preliminary material, notation and definitions that are useful for our manuscript are listed out in Section 2. The main results related to  rank one symmetric tensors are given in Subsection 3.1 and rank two symmetric tensors are provided in Subsection 3.2. Concluding remarks are  given in Sect. 4.
\section{Preliminaries}
In this section, we give some definitions and results that will be helpful to us throughout the paper.\\
 The $n$-dimensional Euclidean space equipped with the usual inner product is denoted by ${\R}$. The scalars are written as small letters such as $\alpha, t, a$,.. and the vectors in ${\R}$ are written as bold letters such as ${\x},{\y}$.  For any ${\x} \in {\R}$, $x_{i}$ denotes the $i$-th component of ${\x}$ and ${\x}^{T}$ is the transpose of ${\x}$. ${\bf 0}$ is used to denote the zero vector in ${\R}$. We write ${\x} \geq {\bf 0}~ (> {\bf 0})$, if each component of ${\x}$ is non-negative (positive). $[n]$ is used to denote the index set $\{1,2,...,n\}$. ${\bf e ^{(i)}}$ is a vector in ${\R}$ having $1$ at the $i$-th place and rest zero. For any ${\x} ,{\y}$ in ${\R}$, ${\x} \otimes {\y}$ is the outer product of ${\x}$ and ${\y}$, that is, ${\x} \otimes {\y} = (x_{i} y_{j})$, for each $i,j \in [n]$. We say that a vector ${\w} = (w_{1},w_{2},...,w_{n}) \in {\R}$ such that $w_{i}$ is non-zero, for each $i \in [n]$, is unisigned if either ${\w} > {\bf 0}$ or ${\w} < {\bf 0}$. ${\mathbb R}^{n \times n}$ is the set of all real square matrices of order $n$.  Any matrix in ${\mathbb R}^{n \times n}$ is denoted by bold capital letters e.g. ${\bf A}$, ${\bf P}$. ${\bf I}$ denotes the identity matrix in ${\mathbb R}^{n \times n}$. $\mathrm{T}(m,n)$ is the set of all $m$th order $n$ dimensional real tensor. Any tensor in  $\mathrm{T}(m,n)$ is written as calligraphic letters e.g. ${\A},{\B}$. The entries of a tensor ${\A}$ are denoted as $a_{i_{1}i_{2}...i_{m}}$, where $i_{j} \in [n]$ and $j \in [m]$. For any ${\A} \in \mathrm{T}(m,n)$ and ${\u} \in {\R}$,
$$({\A}{\bf u}^{m-1})_i = {\sum_{i_{2},...,i_{m}=1}^n}a_{i i_{2}...i_{m}}u_{i_{2}}...u_{i_{m}}.$$ 
For any ${\A} \in {\rm T}(m,n)$ and ${\q} \in {\R}$, the tensor complementarity problem, denoted as $\T({\A},{\q})$, is to find a vector ${\u} \in {\R}$ such that
$${\u} \geq {\bf 0}, {\A}{\u}^{m-1}+{\q} \geq {\bf 0}~\text{and}~{\u}^{T}({\A}{\u}^{m-1}+{\q})=0,$$
or to show that no such vector exists. The vector ${\u}$ is said to be a solution of the $\T({\A},{\q})$.  The set of all such vectors is denoted by $\sol({\A},{\q})$ and refereed as the solution set of the  $\T({\A},{\q})$.
\begin{definition}\rm
A tensor ${\A} \in {\mathrm T} (m,n)$ is said to be
\begin{enumerate}
\item[\rm (i)]  a positive tensor, if all its entries are positive.
\item[\rm (ii)] an ${\rm S}$-tensor \cite{Gaohang}, if there exists a vector ${\w} > {\bf 0}$ such that ${\A}{\w}^{m-1} > {\bf 0}$.
\item[\rm (iii)] a rank one tensor \cite{Qi. L 4}, if there exist non-zero vectors ${\x}^{(1)}, {\x}^{(2)},...,{\x}^{(m)}$ in ${\R}$ such that
$${\A} = {\x}^{(1)}\otimes{\x}^{(2)}\otimes...\otimes{\x}^{(m)}.$$
\end{enumerate}
It is apparent that any tensor can always be decomposed as a linear combination of rank one tensors, although the decomposition need  not be  unique, see \cite[Section 4]{comon}. The minimum number of rank one tensors required to decompose a  tensor  ${\A} \in {\rm T}(m,n)$  as a linear combination of rank one tensors is known as its rank and it is denoted by $rank({\A})$.
\end{definition}

\subsection{Symmetric tensors}
A tensor ${\A}$ is said to be a symmetric tensor if it is invariant under any permutation of its indices. ${\rm S}(m,n)$ is used to denote the set of all $m$th order $n$ dimensional real symmetric tensors.
\begin{definition}\rm A tensor ${\A} \in T(m,n)$ is said to be a {\it rank one symmetric tensor} if there exists a non-zero vector ${\w}\in {\R}$ such that 
$$a_{i_{1}i_{2}...i_{m}} = w_{i_{1}}w_{i_{2}}...w_{i_{m}},~\text{where}~i_{1},...,i_{m}\in [n].$$
\end{definition}
The vector ${\w}$ is said to be the generator of $\A$, and we write ${\A} = [{\w}]^{{\bigotimes m}}$.\\ 

The following theorem  can be considered as a generalization of the eigenvalue decomposition for symmetric matrices to symmetric tensors.
\begin{theorem}\rm \citep[][Lemma 4.2]{comon}\label{decomposition}
Let ${\A} \in \mathrm{S}(m,n)$. Then there exists scalars ${\mu_{1}}, {\mu_{2}},...,{\mu_{k}}$ in ${\mathbb{R}}$ and vectors ${\w}^{(1)},{\w}^{(2)},..., {\w}^{(k)}$ in ${\R}$ such that 
$\A= \sum_{j=1}^k \mu_j [{\w^{(j)}}]^{\bigotimes m}.$
\end{theorem}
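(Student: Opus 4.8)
The plan is to identify $\mathrm{S}(m,n)$ with a space of homogeneous polynomials and to prove, by a short duality argument, that the rank one symmetric tensors $[\w]^{\bigotimes m}$ span it; the scalars $\mu_{j}$ and generators $\w^{(j)}$ then come out of any way of expressing $\A$ in that span. The starting point is the identity: for a symmetric tensor $\B=(b_{i_{1}\dots i_{m}})$ and any $\w\in\R$,
\[
\langle\B,[\w]^{\bigotimes m}\rangle\;=\;\sum_{i_{1},\dots,i_{m}=1}^{n}b_{i_{1}\dots i_{m}}\,w_{i_{1}}\cdots w_{i_{m}}\;=\;\w^{T}\big(\B\,\w^{m-1}\big),
\]
where $\langle\cdot,\cdot\rangle$ denotes the Frobenius inner product on $\mathrm{T}(m,n)$. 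So, under this pairing, $[\w]^{\bigotimes m}$ represents the $m$-th power of the linear form $\x\mapsto\w^{T}\x$, and the theorem is equivalent to the classical statement that every homogeneous polynomial of degree $m$ in $n$ variables is an $\mathbb{R}$-linear combination of $m$-th powers of linear forms.

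First I would set $W:=\span\{[\w]^{\bigotimes m}:\w\in\R\}$, a linear subspace of $\mathrm{S}(m,n)$. Because $\mathrm{S}(m,n)$ is a finite-dimensional real inner product space under the restriction of $\langle\cdot,\cdot\rangle$, it suffices to show that the orthogonal complement of $W$ in $\mathrm{S}(m,n)$ is $\{0\}$; then $W=\mathrm{S}(m,n)$, so in particular $\A\in W$ and the representation follows. So suppose $\B\in\mathrm{S}(m,n)$ is orthogonal to every $[\w]^{\bigotimes m}$; by the identity above this says precisely that the homogeneous polynomial $p(\w):=\w^{T}(\B\,\w^{m-1})$ vanishes identically on $\R$.

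The key step, and the part I expect to be the only real work, is to conclude that $\B=0$ whenever $p\equiv0$; this is the polarization identity for symmetric tensors. Concretely, fix vectors $\x_{1},\dots,\x_{m}\in\R$ and expand $p\big(t_{1}\x_{1}+\dots+t_{m}\x_{m}\big)$ as a polynomial in the scalars $t_{1},\dots,t_{m}$. Since $p$ is homogeneous of degree $m$ and $\B$ is symmetric, the coefficient of the monomial $t_{1}t_{2}\cdots t_{m}$ equals $m!\,\langle\B,\x_{1}\otimes\x_{2}\otimes\cdots\otimes\x_{m}\rangle$; equivalently $\langle\B,\x_{1}\otimes\cdots\otimes\x_{m}\rangle=\tfrac{1}{m!}\sum_{\emptyset\neq S\subseteq[m]}(-1)^{m-|S|}\,p\big(\sum_{k\in S}\x_{k}\big)$. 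As $p\equiv0$, every coefficient of this polynomial in the $t$'s vanishes, hence $\langle\B,\x_{1}\otimes\cdots\otimes\x_{m}\rangle=0$ for all $\x_{1},\dots,\x_{m}\in\R$; taking each $\x_{k}$ to be a standard basis vector ${\bf e}^{(i_{k})}$ yields $b_{i_{1}\dots i_{m}}=0$ for every index tuple, i.e. $\B=0$. Thus the orthogonal complement of $W$ in $\mathrm{S}(m,n)$ is trivial, which completes the argument.

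In the write-up I would state polarization as a separate lemma and then give the two-line duality argument above, since apart from it the only input is the (immediate) nondegeneracy of the Frobenius inner product on $\mathrm{S}(m,n)$. I would also note that the proof can be made constructive at essentially no cost: expanding the polynomial $\w\mapsto\langle\A,[\w]^{\bigotimes m}\rangle=\w^{T}(\A\,\w^{m-1})$ into monomials and rewriting each monomial $x_{1}^{d_{1}}\cdots x_{n}^{d_{n}}$ ($d_{1}+\dots+d_{n}=m$) via the classical identity $y_{1}y_{2}\cdots y_{m}=\tfrac{1}{m!}\sum_{\emptyset\neq S\subseteq[m]}(-1)^{m-|S|}\big(\sum_{k\in S}y_{k}\big)^{m}$ as an explicit combination of $m$-th powers of linear forms produces a decomposition $\A=\sum_{j}\mu_{j}[\w^{(j)}]^{\bigotimes m}$ directly.
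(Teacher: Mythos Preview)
Your argument is correct. Note, however, that the paper does not supply its own proof of this statement: it is quoted verbatim as Lemma~4.2 of Comon, Golub, Lim, and Mourrain \cite{comon} and used as a black box, so there is no in-paper proof to compare against. The polarization/duality route you take---show that any symmetric $\B$ orthogonal to all $[\w]^{\bigotimes m}$ has identically zero associated polynomial and hence vanishes---is the standard argument and is essentially what the cited reference does; your constructive remark at the end (rewriting each monomial as a signed sum of $m$-th powers of linear forms) is likewise classical and matches the approach in \cite{comon}.
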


The representation of any symmetric tensor as a linear combination of rank one symmetric tensors, is also known as symmetric CP decomposition \cite{Kolda}, symmetric outer product decomposition \cite{comon} etc. It should be noted that the decomposition of a symmetric tensor as given in Theorem \ref{decomposition}, need not be unique. However one can always write a symmetric tensor in the form given in Theorem \ref{decomposition} and therefore we define the symmetric rank of a symmetric tensor as follows:
\begin{definition}\rm 
 Let $\A\in T(m,n)$ be a symmetric tensor. The {\it symmetric rank} of $\A$ is denoted by  $\mathop{Sym(\A)}$, defined by
 $$\mathop{Sym(\A)}:=\min\{r: \A=\sum_{j=1}^r \mu_j [{ \w^{(j)}}]^{\bigotimes m},~\mu_j \in \mathbb{R},~{\w^{(j)}} \in \R\}.$$ 
\end{definition}  

The following theorem, which gives a characterization of ${\rm Q}$-tensors will be required to provide counterexamples.
\begin{theorem}\rm\cite[Theorem 2.2]{Huang} \label{Q tensor}
Let ${\A} = (a_{i_{1}i_{2}...i_{m}}) \in T(m,n)$ where $i_{1},i_{2},...,i_{m}\in [n]$. Denote ${\A}_{1...1}:= (a_{i_{1}i_{2}...i_{m}}) \in T(m,n-1)$ with $i_{1},i_{2}...,i_{m}\in [n]\setminus \{1\}$ and ${\A}_{2...2}:= (a_{i_{1}i_{2}...i_{m}}) \in T(m,n-1)$ with $i_{1},i_{2},...,i_{m}\in [n]\setminus \{2\}$. Suppose that $a_{1i_{2}...i_{m}} = a_{2i_{2}...i_{m}}$ for all $i_{2},...,i_{m}$ in $[n]$, and both ${\A}_{1...1}$ and ${\A}_{2...2}$ are ${\rm Q}$-tensors. Then $\A$ is a ${\rm Q}$-tensor.
\end{theorem}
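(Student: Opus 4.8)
The plan is to imitate the classical ``equal-rows gluing lemma'' for ${\rm Q}$-matrices. Given an arbitrary $\q=(q_1,\dots,q_n)\in\R$, I will produce a solution of $\T(\A,\q)$ by solving one of the two $(n-1)$-dimensional subproblems $\T(\A_{1\dots1},\cdot)$ or $\T(\A_{2\dots2},\cdot)$ and then padding the resulting vector with a single zero in coordinate $1$ or $2$. The coordinate to be zeroed out is dictated by the sign of $q_1-q_2$: if $q_1\ge q_2$ I kill $u_1$ and use that $\A_{1\dots1}$ is a ${\rm Q}$-tensor, while if $q_1<q_2$ I kill $u_2$ and use $\A_{2\dots2}$. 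The two cases are mirror images, so it suffices to carry out the first.

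So suppose $q_1\ge q_2$ and put $\tilde\q=(q_2,q_3,\dots,q_n)$, indexed by $[n]\setminus\{1\}$. Since $\A_{1\dots1}$ is a ${\rm Q}$-tensor, $\T(\A_{1\dots1},\tilde\q)$ has a solution $\tilde\u=(u_2,\dots,u_n)$; set $\tilde\w=\A_{1\dots1}\tilde\u^{m-1}+\tilde\q$, so that $\tilde\u\ge{\bf 0}$, $\tilde\w\ge{\bf 0}$ and $\tilde\u^{T}\tilde\w=0$. Define $\u=(0,u_2,u_3,\dots,u_n)\in\R$. The crux is two elementary observations about $\w:=\A\u^{m-1}+\q$. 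First, because $u_1=0$, every monomial $u_{i_2}\cdots u_{i_m}$ containing a factor $u_1$ vanishes, so for each $i\in[n]\setminus\{1\}$ the sum defining $(\A\u^{m-1})_i$ reduces to one over $i_2,\dots,i_m\in[n]\setminus\{1\}$, i.e. $(\A\u^{m-1})_i=(\A_{1\dots1}\tilde\u^{m-1})_i$. Second, the hypothesis $a_{1i_2\dots i_m}=a_{2i_2\dots i_m}$ together with $u_1=0$ forces $(\A\u^{m-1})_1=(\A\u^{m-1})_2$.

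Granting these, the verification is routine. For $i\in[n]\setminus\{1\}$ we get $w_i=(\A_{1\dots1}\tilde\u^{m-1})_i+q_i=\tilde w_i\ge 0$ and $u_iw_i=u_i\tilde w_i=0$; and $u_1w_1=0$ since $u_1=0$, so $\u^{T}\w=0$. Finally $w_1=(\A\u^{m-1})_1+q_1=(\A\u^{m-1})_2+q_1=(\tilde w_2-q_2)+q_1=\tilde w_2+(q_1-q_2)\ge 0$, using $q_1\ge q_2$ and $\tilde w_2 \ge 0$. Hence $\u$ solves $\T(\A,\q)$. As $\q\in\R$ was arbitrary, $\A$ is a ${\rm Q}$-tensor; the case $q_1<q_2$ is obtained by swapping the indices $1$ and $2$ and using $\A_{2\dots2}$.

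I do not expect a genuine obstacle. The only thing requiring care is the bookkeeping: making precise the identification of $\A_{1\dots1}$ acting on $\tilde\u$ with the restriction of $\A\u^{m-1}$ to the coordinates in $[n]\setminus\{1\}$ when $u_1=0$, and noting that the single inequality that is not automatic, namely $w_1\ge 0$, is exactly the one rescued jointly by the equal-slice hypothesis $a_{1i_2\dots i_m}=a_{2i_2\dots i_m}$ and the sign of $q_1-q_2$.
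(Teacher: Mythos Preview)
Your argument is correct; note, however, that the paper does not supply its own proof of this statement---it is quoted from \cite{Huang} and used only as a black box to verify the counterexamples---so there is nothing in the paper to compare against. Your proof is the standard one and matches the approach in the cited reference.
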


\section{Main Results}
\subsection{Rank one symmetric tensors}
In this subsection, we show  that the well-known result on ${\mathrm{Q}}$-matrices \citep[][Theorem 3.3]{Sivakumar} that a rank one matrix ${\bf A}$ is a ${\mathrm{Q}}$-matrix if and only if ${\bf A}$ is positive, may not be true for tensors.  Clearly, any positive tensor must be a ${\mathrm{Q}}$-tensor, \citep[][Theorem 3.5]{Yisheng}. However, the converse does not hold. In the following, we give an example of even and  odd order tensors which are ${\rm Q}$-tensors but not positive.

\begin{example}\rm  Let ${\A} \in T(3,2)$ such that ${\A} = {\x}\otimes{\y}\otimes{\bf z}$, where ${\x}=(-1,-1), {\y} = (1,-1), {\z}=(-1,1)$. Then we have $a_{111}=1, a_{112}=-1, a_{121}=-1, a_{122}=1, a_{222}=1, a_{211}=1, a_{212}=-1, a_{221}=-1$. So ${\A}$ is not a positive tensor. Now we show that ${\A}$ is a ${\mathrm{Q}}$-tensor. We have ${\A}_{111}=(a_{222}) =1>0$ and ${\A}_{222}=(a_{111}) =1>0$. Therefore both the principal subtensors ${\A}_{111}$ and ${\A}_{222}$ are ${\mathrm{Q}}$-tensors. Also note that $a_{1i_{2}i_{3}} = a_{2i_{2}i_{3}}$, for all $i_{2}, i_{3} \in \{1,2\}$. Hence by Theorem \ref{Q tensor}, $\A$ is a ${\mathrm{Q}}$-tensor. 
\end{example}
\begin{example}\rm Let ${\A} \in T(4,2)$ such that ${\A} = {\x}\otimes{\y}\otimes{\bf z}\otimes{\w}$, where ${\x}=(1,1), {\y} = (1,-1), {\z}=(-1,-1), {\w} = (-1,1)$. Then we have $a_{1111}=1, a_{2222}=1, a_{1112}=-1, a_{2112}= -1, a_{1121}=1, a_{2121}=1, a_{1211}=-1, a_{2211}=-1, a_{1122}=-1, a_{2122}=-1, a_{1222}=1, a_{2111}=1, a_{1212}=1, a_{2212}=1, a_{1221}=-1, a_{2221}=-1$. So ${\A}$ is not a positive tensor. Now we show that ${\A}$ is a ${\mathrm{Q}}$-tensor. Note that ${\A}_{1111}=(a_{2222}) =1>0$ and ${\A}_{2222}=(a_{1111}) =1>0$. Also note that $a_{1i_{2}i_{3}i_{4}} = a_{2i_{2}i_{3}i_{4}}$, for all $i_{2}, i_{3}, i_{4} \in \{1,2\}$. Hence by Theorem \ref{Q tensor}, $\A$ is a ${\mathrm{Q}}$-tensor.   
\end{example}

We now give a property of rank one symmetric tensors.
\begin{proposition}\label{positive} \rm Let ${\A} \in {\mathrm T}(m,n)$ be a rank one symmetric tensor. The followings are true.
\begin{enumerate}
\item[\rm (i)]  If $\A$ is positive, then the generator of $\A$  is unisigned.
\item[\rm (ii)] If the generator of ${\A}$ is unisigned, then either ${\A}$ is positive or $-{\A}$ is positive.
\end{enumerate}
\end{proposition}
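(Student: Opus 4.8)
The plan is to work directly with the defining identity $a_{i_{1}i_{2}\dots i_{m}} = w_{i_{1}}w_{i_{2}}\cdots w_{i_{m}}$ for the generator ${\w}$, splitting the analysis according to the parity of $m$.

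For (i), I would assume ${\A}$ is positive and first look at the diagonal entries $a_{ii\dots i} = w_{i}^{m}$: positivity forces $w_{i}^{m} > 0$, so $w_{i} \neq 0$ for every $i \in [n]$ and the generator has no zero component. To pin down the signs I would then examine the mixed entries $a_{ijj\dots j} = w_{i}\,w_{j}^{\,m-1}$. If $m$ is odd, then $w_{j}^{\,m-1} > 0$, and $w_{i}^{m} > 0$ already gives $w_{i} > 0$, so ${\w} > {\bf 0}$. If $m$ is even, then $m-1$ is odd, so $w_{j}^{\,m-1}$ carries the sign of $w_{j}$, and $a_{ijj\dots j} > 0$ forces $\sgn(w_{i}) = \sgn(w_{j})$; letting $i,j$ range over $[n]$ shows all components of ${\w}$ share a common sign. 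In either case ${\w}$ is unisigned, the case $n=1$ being trivial.

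For (ii), I would assume ${\w}$ is unisigned. If ${\w} > {\bf 0}$, each entry $w_{i_{1}}\cdots w_{i_{m}}$ is a product of positive numbers, so ${\A}$ is positive. If ${\w} < {\bf 0}$, write ${\w} = -{\vv}$ with ${\vv} > {\bf 0}$; then $a_{i_{1}\dots i_{m}} = (-1)^{m} v_{i_{1}}\cdots v_{i_{m}}$, i.e. ${\A} = [{\vv}]^{\bigotimes m}$ when $m$ is even and ${\A} = -[{\vv}]^{\bigotimes m}$ when $m$ is odd. Hence ${\A}$ is positive in the even case and $-{\A}$ is positive in the odd case, which is exactly the claim.

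There is no real obstacle in this argument; it is elementary once the rank-one structure is unwound. The only point that needs care is the parity bookkeeping for $m$, and this is precisely where the behavior of rank one symmetric tensors diverges from the matrix case $m=2$ — and where the earlier counterexamples (Examples 1 and 2) originate — so it is worth carrying out the even/odd split explicitly rather than hiding it behind an ``analogously''.
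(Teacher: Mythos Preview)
Your proposal is correct and follows essentially the same approach as the paper: both arguments hinge on the mixed entries $a_{ij\dots j} = w_{i}w_{j}^{\,m-1}$ for part~(i) and on the sign/parity case split for part~(ii). The only cosmetic differences are that you make the nonvanishing of $w_{i}$ explicit via the diagonal entries (the paper simply asserts it is ``clear'') and you package the $\w<{\bf 0}$ case in~(ii) via the substitution ${\w}=-{\vv}$, but the logical content is identical.
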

\begin{proof}
Let ${\A} \in {\mathrm T}(m,n)$ be a rank one symmetric tensor such that ${\A} = [{\w}]^{\bigotimes m}$.\\ 
(i): Let $\A$ be a positive tensor. It is clear that $w_{i}$ is non-zero for each $i \in [n]$. Note that $a_{ij...j} = w_{i}{w_{j}}^{m-1}$. Since ${\A}$ is positive, therefore $w_{i}{w_{j}}^{m-1} > 0$, for each $i,j \in [n]$. If $m$ is odd, then  $w_{i} > 0$, for each $i \in [n]$. Hence ${\w} > {\bf 0}$. Also, if  $m$ is even, then $w_{i}$ and $w_{j}$ must have the same sign, for each $i,j\in [n]$. Hence either ${\w} > {\bf 0}$ or ${\w}< {\bf 0}$. Therefore the generator of ${\A}$  is unisigned.\\
(ii): Suppose that the generator of ${\A}$ is unisigned, so $w_{i}$ is non-zero, for each $i \in [n]$. If ${\w}> {\bf 0}$, then we get ${\A}$ is a positive tensor. Also when ${\w}<{\bf 0}$ and $m$ is even gives us ${\A}$ is positive. If ${\w} < {\bf 0}$ and $m$ is odd, then all the entries of ${\A}$ are negative. Hence $-{\A}$ is positive.
\end{proof}

We now prove our result for a rank one symmetric tensor.
\begin{theorem} \rm \label{Equivalent} Let ${\A} \in {\mathrm{T}}(m,n)$ be a rank one symmetric tensor. The followings are equivalent.
\begin{enumerate}
\item[(i)] $\A$ is a ${\rm Q}$-tensor.
\item[(ii)] ${\A}$ is an ${\rm S}$-tensor.
\item[(iii)] $\A$ is positive.
\item[(iv)] $\A$ is an ${\rm R}_{0}$-tensor.
\end{enumerate}
\end{theorem}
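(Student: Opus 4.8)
The plan is to prove the cyclic chain of implications $(iii) \Rightarrow (ii) \Rightarrow (i) \Rightarrow (iii)$, together with the separate equivalence $(iii) \Leftrightarrow (iv)$, since each individual link is short. Write ${\A} = [{\w}]^{\bigotimes m}$, so that $({\A}{\u}^{m-1})_i = w_i ({\w}^T {\u})^{m-1}$ for every ${\u} \in {\R}$; this identity is the workhorse of the whole argument. First I would dispose of $(iii) \Rightarrow (ii)$: if $\A$ is positive then by Proposition \ref{positive}(i) its generator ${\w}$ is unisigned, and taking ${\u} = {\w}$ if ${\w} > {\bf 0}$ or ${\u} = -{\w}$ if ${\w} < {\bf 0}$ (with $m$ odd; when $m$ is even ${\w} > {\bf 0}$ may be assumed after noting positivity forces the even-power sign) yields ${\u} > {\bf 0}$ with ${\A}{\u}^{m-1} > {\bf 0}$, so $\A$ is an ${\rm S}$-tensor. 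The implication $(ii) \Rightarrow (i)$ can be quoted or proved in a line: an ${\rm S}$-tensor is a ${\rm Q}$-tensor (this is standard, but if the paper wants it self-contained one shows solvability of $\T({\A},{\q})$ for arbitrary ${\q}$ by scaling the witness vector ${\w}$; alternatively route through positivity and cite \citep[][Theorem 3.5]{Yisheng}).

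The heart of the proof is $(i) \Rightarrow (iii)$, and this is essentially the commented-out Theorem \ref{ifthen}, which I would reinstate. Assume $\A$ is a ${\rm Q}$-tensor. The first step is to show $w_i \neq 0$ for every $i \in [n]$: if some $w_i = 0$, choose ${\q}$ with $q_i < 0$ and all other components $0$; then for any feasible ${\u} \geq {\bf 0}$ the $i$-th component of ${\A}{\u}^{m-1} + {\q}$ equals $w_i({\w}^T{\u})^{m-1} + q_i = q_i < 0$, so $\T({\A},{\q})$ is infeasible, contradicting the ${\rm Q}$-property. The second step: pick ${\q}$ with every $q_i < 0$. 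By the ${\rm Q}$-property there is a solution ${\y}$, which cannot be ${\bf 0}$ since ${\A}{\bf 0}^{m-1} + {\q} = {\q} \not\geq {\bf 0}$. From ${\A}{\y}^{m-1} + {\q} \geq {\bf 0}$ we get $w_i ({\w}^T{\y})^{m-1} \geq -q_i > 0$ for each $i$, so ${\w}^T {\y} \neq 0$ and, writing $t := ({\w}^T{\y})^{m-1}$, every $w_i$ has the same sign as $t$. If $m$ is odd, $t$ and ${\w}^T{\y}$ have the same sign, and one checks this forces ${\w} > {\bf 0}$ (if ${\w} < {\bf 0}$ then ${\w}^T{\y} \le 0$ since ${\y} \ge {\bf 0}$, contradicting ${\w}^T{\y} > 0$), hence $\A$ is positive; if $m$ is even, ${\w}$ is unisigned and then by Proposition \ref{positive}(ii) either ${\A}$ or $-{\A}$ is positive — but if $-{\A}$ were positive then $-a_{ii\ldots i} = -w_i^m > 0$ is impossible for even $m$, so ${\A}$ is positive.

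For $(iii) \Leftrightarrow (iv)$: if $\A$ is positive then for any nonzero ${\u} \geq {\bf 0}$ we have ${\w}^T{\u} \neq 0$ (as ${\w}$ is unisigned by Proposition \ref{positive}(i)), so $({\A}{\u}^{m-1})_i = w_i({\w}^T{\u})^{m-1}$ is unisigned with the sign of $w_i$, whence ${\u}^T{\A}{\u}^{m-1} = ({\w}^T{\u})^m > 0$, so ${\u}$ cannot solve $\T({\A},{\bf 0})$; thus $\A$ is ${\rm R}_0$. Conversely, having already shown $(i) \Leftrightarrow (ii) \Leftrightarrow (iii)$, I would get $(iv) \Rightarrow (iii)$ by contraposition: if $\A$ is not positive, then by Proposition \ref{positive} its generator is not unisigned (or is unisigned with $-{\A}$ positive, handled as above), so either some $w_i = 0$ — giving a nonzero solution of $\T({\A},{\bf 0})$ by a direct construction along a coordinate axis $t{\bf e}^{(i)}$ on which ${\w}^T(t{\bf e}^{(i)}) = 0$ — or ${\w}$ has mixed signs, in which case one exhibits a nonzero ${\y} \geq {\bf 0}$ with ${\w}^T{\y} = 0$ and checks ${\y}$ solves $\T({\A},{\bf 0})$. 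I expect the main obstacle to be the bookkeeping around even versus odd $m$ and the sign of ${\w}^T{\y}$ in the $(i) \Rightarrow (iii)$ step; everything else is a routine application of the factored form of ${\A}{\u}^{m-1}$ together with Proposition \ref{positive}.
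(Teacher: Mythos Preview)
The main gap is your link $(ii)\Rightarrow(i)$. It is \emph{not} a standard fact that an ${\rm S}$-tensor is a ${\rm Q}$-tensor; already for matrices the inclusion runs the other way (${\rm Q}\subseteq{\rm S}$, not conversely), and your suggested one-line argument ``by scaling the witness vector'' cannot work---scaling a single feasible point does not manufacture complementarity for an arbitrary ${\q}$. Your fallback ``route through positivity and cite \citep[][Theorem 3.5]{Yisheng}'' is the right instinct, but it requires first proving $(ii)\Rightarrow(iii)$, which you never supply. The paper avoids this by orienting the cycle in the opposite direction, $(i)\Rightarrow(ii)\Rightarrow(iii)\Rightarrow(i)$: the step $(i)\Rightarrow(ii)$ is the genuinely standard inclusion (cited from \cite{Gaohang}), $(iii)\Rightarrow(i)$ is cited from \cite{Yisheng}, and the actual work goes into $(ii)\Rightarrow(iii)$, which is proved by precisely the sign analysis you carry out for $(i)\Rightarrow(iii)$, only starting from an ${\rm S}$-witness ${\u}>{\bf 0}$ with ${\A}{\u}^{m-1}>{\bf 0}$ rather than from a TCP solution. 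Incidentally, your $(i)\Rightarrow(iii)$ argument contains a parity slip: when $m$ is odd, $m-1$ is even, so $t=({\w}^{T}{\y})^{m-1}\geq 0$ regardless of the sign of ${\w}^{T}{\y}$; $t$ does not ``have the same sign as ${\w}^{T}{\y}$'', and the correct deduction is simply that $w_{i}t>0$ together with $t>0$ forces each $w_{i}>0$.

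Your contrapositive for $(iv)\Rightarrow(iii)$ is close to the paper's, which likewise exhibits an explicit nonzero solution of $\T({\A},{\bf 0})$---namely ${\z}=(w_{2},-w_{1},0,\ldots,0)$ once $w_{1}<0<w_{2}$---and uses ${\bf e}^{(i)}$ when some $w_{i}=0$. But you dismiss the residual subcase ``${\w}$ unisigned with $-{\A}$ positive'' (that is, $m$ odd and ${\w}<{\bf 0}$) as ``handled as above''. It is not: in that subcase every nonzero ${\y}\geq{\bf 0}$ has ${\w}^{T}{\y}<0$, so your orthogonality construction collapses and no nonzero solution of $\T({\A},{\bf 0})$ is produced. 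You need a separate argument here, and you should be aware that the paper's proof also glosses over this point, asserting without justification that a negative entry of ${\A}$ forces two coordinates of ${\w}$ to have opposite signs.
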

\begin{proof}
Let ${\A} \in {\mathrm{T}}(m,n)$ be a rank one symmetric tensor such that ${\A} = [{\w}]^{\bigotimes m}$. The direction of the proof is as follows: (i)$\implies$(ii)$\implies$(iii)$\implies$(i) and (iii)$\iff$ (iv). \\ 
(i)$\implies$(ii): It follows from \cite[Theorem 3.1]{Gaohang}.\\
(ii)$\implies$(iii): Let ${\A}$ be an ${\rm S}$-tensor. We first claim that $w_{i}$ is non-zero, for each $i \in [n]$. Assume to the contrary, that there exists an  index $j$ such that $w_{j} = 0$. Then for each ${\u} > {\bf 0}$, we have $({\A}{\u}^{m-1})_{j} = w_{j}({\w}^{T}{\u})^{m-1}=0$, a contradiction to ${\A}$ being an ${\rm S}$-tensor. Hence $w_{i}$ is non-zero, for each $i \in [n]$. We now prove that ${\A}$ is positive. Since ${\A}$ is an ${\rm S}$-tensor, the system
$${\A}{\u}^{m-1}>{\bf 0}, {\u}>{\bf 0},$$
has a solution. So there exists ${\u}>{\bf 0}$ such that $({\A}{\u}^{m-1})_{i} = w_{i}({\w}^{T}{\u})^{m-1} > {0}$, for each $i \in [n]$. If $m$ is odd, then $({\w}^{T}{\u})^{m-1} > 0$ implying $w_{i}>0$, for each $i \in [n]$. Hence ${\w} > {\bf 0}$. If $m$ is even, then we get $w_{i}$ and  $({\w}^{T}{\u})^{m-1}$ must have the same sign, for each $i \in [n]$. Hence ${\w}$ is unisigned. By Proposition \ref{positive}, either $\A$ is positive or $-{\A}$ is positive. Since $\A$ is an $\rm {S}$-tensor, ${\A}$ must be positive.\\
(iii)$\implies$(i): The proof follows from \citep[][Theorem 3.5]{Yisheng}.\\
(iii)$\implies$(iv):  Clearly, any positive tensor is an ${\rm R}_{0}$-tensor.\\
(iv)$\implies$(iii): Let $\A$ be an ${\rm R}_{0}$-tensor. We first claim that $w_{i}$ is non-zero, for each $i \in [n]$. Assume contrary. Suppose there exists an index $i$ such that $w_{i} = 0$. Let ${\y} = {\bf e ^{(i)}}$. It is easy to see that ${\y}$ is a non-zero solution of ${\T}({\A},{\bf 0})$, which is a contradiction to ${\A}$ being an ${\rm R}_{0}$-tensor. Hence $w_{i}$ is non-zero, for each $i \in [n]$. We now show that ${\A}$ is positive. To the contrary, assume that there exist  $i_{1},i_{2},...,i_{m} \in [n]$ such that $a_{i_{1}i_{2}...i_{m}}<0$. This implies $w_{i_{1}}w_{i_{2}}...w_{i_{m}} < 0$. Then there must exist atleast two indices $i,j \in [n]$ such that $w_{i}w_{j} < 0$. Without loss of generality, assume that $w_{1}<0$ and $w_{2}>0$. Then it can be easily verified that ${\z} = (w_{2},-w_{1},...,0)$ is a non-zero solution of ${\T}({\A},{\bf 0})$, which is a contradiction to $\A$ being an ${\rm R}_{0}$-tensor. Hence $\A$ is positive.  
\end{proof}

\subsection{Symmetric tensors with ${\mathop{Sym}({\A}) = 2}$}
Recently, Parthasarathy et al. \cite[][Theorem 3.3]{Parthasarathy} proved that if ${\bf A}$ is an $n \times n$ real ${\mathrm Q}$-matrix, then it is ${\mathrm{R}}_{0}$ for $n \leq 2$. This raises a question whether the same result holds in the case of tensors. For $n=1$, the tensor ${\A}$ contains a single positive entry, being a ${\rm Q}$-tensor and hence it is ${\rm R}_{0}$. However, it may not hold for $n=2$. Here is an example. 
\begin{example}\rm Let ${\A} \in T(3,2)$, where $a_{111}= 4 = a_{211}, a_{222}=1=a_{122}, a_{121}=-4=a_{221}$ and $a_{112} = 0 = a_{212}$. Then it can be easily verified using Theorem \ref{Q tensor} that $\A$ is a ${\mathrm Q}$-tensor, but $\A$ is not an ${\mathrm{R}}_{0}$-tensor. This can be seen as follows: 
$${\A}{\x}^{2} = \begin{pmatrix}
4x_{1}^{2}+x_{2}^{2}-4x_{1}x_{2}\\
4x_{1}^{2}+x_{2}^{2}-4x_{1}x_{2}
\end{pmatrix}$$
Take ${\x} = (1,2)$, then ${\A}{\x}^{2} = (0,0)$. So ${\T}({\A},{\bf 0})$ has a non-zero solution.
\end{example}
In the following, we prove that the result holds if we put an extra condition on the tensor ${\A}$ to be a symmetric tensor with $\mathop{Sym(\A)} =2$. We start by proving the following results, which will provide crucial information to prove our main result.

\begin{theorem}\rm \label{LCP}
Let ${\A} \in {\rm S}(m,n)$  and ${\q} \in {\R}$. Let ${\A} =  \sum_{j=1}^k \mu_j [{\w^{(j)}}]^{\bigotimes m}$ be the symmetric outer product decomposition of ${\A}$, for some scalars ${\mu_{1}}, {\mu_{2}},...,{\mu_{k}}$ and vectors ${\w}^{(1)},{\w}^{(2)},..., {\w}^{(k)}$ in ${\R}$. Then ${\u} \in \sol({\A},{\q})$ if and only if ${\bf P}{\u} \in \sol(\hat{\A},{\bf P}{\q})$, where ${\bf P}$ is a permutation matrix in ${\mathbb R}^{n \times n}$ and  ${\hat{\A}} = \sum_{j=1}^k \mu_j [{\bf P}{\w^{(j)}}]^{\bigotimes m} \in {\rm S}(m,n)$.
\end{theorem}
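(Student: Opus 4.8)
The plan is to reduce everything to the behaviour of the defining homogeneous polynomial $(\A\u^{m-1})_i$ under a coordinate permutation. First I would record the basic algebraic identity: if $\bf P$ is a permutation matrix and $\A = [\w]^{\bigotimes m}$ is a rank one symmetric tensor, then $\bf P(\A\u^{m-1}) = \hat\A(\bf P\u)^{m-1}$ where $\hat\A = [\bf P\w]^{\bigotimes m}$. This is immediate from $(\A\u^{m-1})_i = w_i(\w^T\u)^{m-1}$ and the fact that $(\bf P\w)^T(\bf P\u) = \w^T\u$ (since $\bf P^T\bf P = \bf I$), so that $(\hat\A(\bf P\u)^{m-1})_i = (\bf P\w)_i\,((\bf P\w)^T(\bf P\u))^{m-1} = w_{\pi(i)}(\w^T\u)^{m-1} = (\bf P(\A\u^{m-1}))_i$. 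Summing over $j$ with the scalars $\mu_j$ gives, for the general symmetric outer product decomposition, $\bf P(\A\u^{m-1}) = \hat\A(\bf P\u)^{m-1}$ with $\hat\A = \sum_{j=1}^k \mu_j[\bf P\w^{(j)}]^{\bigotimes m}$. Note $\hat\A$ is again symmetric, being a linear combination of rank one symmetric tensors.

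Next I would translate the three complementarity conditions one at a time, using that $\bf P$ is a bijection on $\R$ preserving the nonnegative orthant and the inner product. Suppose $\u \in \sol(\A,\q)$, i.e. $\u \geq \bf 0$, $\A\u^{m-1} + \q \geq \bf 0$, and $\u^T(\A\u^{m-1}+\q) = 0$. Apply $\bf P$: since $\bf P$ permutes coordinates, $\bf P\u \geq \bf 0$; and $\bf P(\A\u^{m-1}+\q) = \bf P(\A\u^{m-1}) + \bf P\q = \hat\A(\bf P\u)^{m-1} + \bf P\q \geq \bf 0$. For the orthogonality, $(\bf P\u)^T(\hat\A(\bf P\u)^{m-1} + \bf P\q) = (\bf P\u)^T\bf P(\A\u^{m-1}+\q) = \u^T\bf P^T\bf P(\A\u^{m-1}+\q) = \u^T(\A\u^{m-1}+\q) = 0$. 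Hence $\bf P\u \in \sol(\hat\A,\bf P\q)$. The converse follows by the same computation applied to $\bf P^{-1} = \bf P^T$, which is again a permutation matrix, together with the observation that $\sum_j \mu_j[\bf P^T(\bf P\w^{(j)})]^{\bigotimes m} = \sum_j \mu_j[\w^{(j)}]^{\bigotimes m} = \A$, so the roles of $(\A,\q)$ and $(\hat\A,\bf P\q)$ are symmetric under $\bf P \mapsto \bf P^T$.

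Honestly there is no serious obstacle here; the statement is essentially the assertion that the TCP is equivariant under the orthogonal permutation action, and the only thing to be a little careful about is that the decomposition data is carried along consistently — i.e. that $\hat\A$ as defined via the decomposition really equals the tensor one would get by permuting the indices of $\A$ directly, and that it does not depend on which (non-unique) symmetric outer product decomposition of $\A$ one started from. Both points are handled by the identity in the first paragraph: the map $\u \mapsto \A\u^{m-1}$ determines all complementarity data, and $\bf P(\A\u^{m-1}) = \hat\A(\bf P\u)^{m-1}$ holds at the level of these polynomial maps regardless of the decomposition, so $\hat\A$ acts correctly even if its coordinate expression is reached through a particular decomposition. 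I would therefore present the proof in the order: (1) the key identity $\bf P(\A\u^{m-1}) = \hat\A(\bf P\u)^{m-1}$; (2) verification of the three conditions for the forward direction; (3) the converse via $\bf P^T$.
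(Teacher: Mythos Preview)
Your proposal is correct and follows essentially the same route as the paper: both hinge on the identity ${\bf P}({\A}{\u}^{m-1}) = \hat{\A}({\bf P}{\u})^{m-1}$, obtained from $({\bf P}{\w})^{T}({\bf P}{\u}) = {\w}^{T}{\u}$, and then transfer the three complementarity conditions via the orthogonality and nonnegativity-preserving properties of ${\bf P}$. Your explicit handling of the converse through ${\bf P}^{T}$ and your remark on decomposition independence are minor additions; the paper simply writes ``the converse follows similarly.''
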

\begin{proof}
Given that ${\A} =  \sum_{j=1}^k \mu_j [{\w^{(j)}}]^{\bigotimes m} \in {\rm S}(m,n)$ is the symmetric outer product decomposition of ${\A}$, where  ${\mu_{1}}, {\mu_{2}},...,{\mu_{k}} \in {\mathbb{R}}$ and ${\w}^{(1)},{\w}^{(2)},..., {\w}^{(k)} \in {\R}$. Let ${\q} \in {\R}$ and ${\bf P}$ be a permutation matrix in ${\mathbb R}^{n \times n}$. Let ${\u} \in \sol({\A},{\q})$. Then we have ${\bf u} \geq {\bf 0}$, ${\A}{\u}^{m-1}+{\q} \geq {\bf 0}$ and ${\u}^{T}({\A}{\u}^{m-1}+{\q}) = 0$. Note that  
$$({\A}{\u}^{m-1})_{i} = \mu_{1} {w^{(1)}_{i}}(({\w}^{(1)})^{T}{\u})^{m-1}+ \mu_{2} {w^{(2)}_{i}}(({\w}^{(2)})^{T}{\u})^{m-1}+...+\mu_{k} {w^{(k)}_{i}}(({\w}^{(k)})^{T}{\u})^{m-1}.$$
Therefore 
$$({\A}{\u}^{m-1}+{\q}) = \begin{bmatrix}
\mu_{1}{\w}^{(1)} & \mu_{2}{\w}^{(2)}&...&\mu_{k}{\w}^{(k)}
\end{bmatrix} \begin{bmatrix}
(({\w}^{(1)})^{T}{\u})^{m-1}\\
(({\w}^{(2)})^{T}{\u})^{m-1}\\
\vdots\\
(({\w}^{(1)})^{T}{\u})^{m-1}
\end{bmatrix} + {\q}.$$
This implies
\begin{equation} \label{PA}
{\bf P}({\A}{\u}^{m-1}+{\q})= \begin{bmatrix}
\mu_{1}{\bf P}{\w}^{(1)} & \mu_{2}{\bf P}{\w}^{(2)}&...&\mu_{k}{\bf P}{\w}^{(k)}
\end{bmatrix} \begin{bmatrix}
(({\bf P}{\w}^{(1)})^{T}({\bf P}{\u}))^{m-1}\\
(({\bf P}{\w}^{(2)})^{T}({\bf P}{\u}))^{m-1}\\
\vdots\\
(({\bf P}{\w}^{(k)})^{T}({\bf P}{\u}))^{m-1}
\end{bmatrix} + {\bf P}{\q}.
\end{equation}
Let $\hat{\A} = \sum_{j=1}^k \mu_j [{\bf P}{\w^{(j)}}]^{\bigotimes m}$, then from Eq.(\ref{PA}), we can see that ${\bf P}({\A}{\u}^{m-1}+{\q}) = {\hat{\A}}({\bf P}{\u})^{m-1} + {\bf P}{\q}$. Therefore we get ${\bf P}{\u} \geq {\bf 0}, {\hat{\A}}({\bf P}{\u})^{m-1} + {\bf P}{\q} \geq {\bf 0}$, and $({\bf P}{\u})^{T}({\hat{\A}}({\bf P}{\u})^{m-1} + {\bf P}{\q})= {\u}^{T}({\A}{\u}^{m-1}+{\q}) = 0$. Hence ${\bf P}{\u}$ is a solution of $\T({\hat{\A}},{\bf P}{\q})$. The converse follows similarly.
\end{proof}

As a consequence, we have the following result.
\begin{corollary}\rm \label{not}
Let ${\A} \in {\rm S}(m,n)$ and ${\A} =  \sum_{j=1}^k \mu_j [{\w^{(j)}}]^{\bigotimes m}$ be the symmetric outer product decomposition of ${\A}$, where  ${\mu_{1}}, {\mu_{2}},...,{\mu_{k}} \in {\mathbb{R}}$ and ${\w}^{(1)},{\w}^{(2)},..., {\w}^{(k)} \in {\R}$. Let ${\hat{\A}} = \sum_{j=1}^k \mu_j [{\bf P}{\w^{(j)}}]^{\bigotimes m} \in {\rm S}(m,n)$, and ${\bf P}$ be a permutation matrix in ${\mathbb R}^{n \times n}$. Then
\begin{enumerate}
\item[\rm (i)] ${\A}$ is an ${\rm R}_{0}$-tensor if and only if $\hat{\A}$ is an ${\rm R}_{0}$-tensor.
\item[\rm (ii)] ${\A}$ is a ${\rm Q}$-tensor if and only if $\hat{\A}$ is a ${\rm Q}$-tensor. 
\end{enumerate}
\end{corollary}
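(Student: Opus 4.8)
The plan is to read off both equivalences directly from Theorem~\ref{LCP}. That theorem provides, for every right-hand side ${\q}\in{\R}$, an explicit bijection between $\sol({\A},{\q})$ and $\sol(\hat{\A},{\bf P}{\q})$, namely ${\u}\mapsto{\bf P}{\u}$ (its inverse being ${\bf v}\mapsto{\bf P}^{T}{\bf v}$, since ${\bf P}$ is a permutation matrix). So the whole corollary is a matter of feeding the correct ${\q}$ into this bijection and using that ${\bf P}$ is invertible.

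\textbf{Proof of (i).} Apply Theorem~\ref{LCP} with ${\q}={\bf 0}$. Since ${\bf P}{\bf 0}={\bf 0}$, the theorem says ${\u}\in\sol({\A},{\bf 0})$ if and only if ${\bf P}{\u}\in\sol(\hat{\A},{\bf 0})$; equivalently, $\sol(\hat{\A},{\bf 0})={\bf P}\big(\sol({\A},{\bf 0})\big)$. Because ${\bf P}$ is a bijection on ${\R}$ that fixes ${\bf 0}$, the image ${\bf P}\big(\sol({\A},{\bf 0})\big)$ equals $\{{\bf 0}\}$ precisely when $\sol({\A},{\bf 0})=\{{\bf 0}\}$. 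This is exactly the statement that ${\A}$ is an ${\rm R}_{0}$-tensor iff $\hat{\A}$ is an ${\rm R}_{0}$-tensor.

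\textbf{Proof of (ii).} Suppose ${\A}$ is a ${\rm Q}$-tensor and let ${\q}'\in{\R}$ be arbitrary. Put ${\q}={\bf P}^{T}{\q}'\in{\R}$, so that ${\bf P}{\q}={\q}'$. Since ${\A}$ is a ${\rm Q}$-tensor, $\sol({\A},{\q})\neq\emptyset$; picking ${\u}$ in it, Theorem~\ref{LCP} gives ${\bf P}{\u}\in\sol(\hat{\A},{\bf P}{\q})=\sol(\hat{\A},{\q}')$, so $\sol(\hat{\A},{\q}')\neq\emptyset$. As ${\q}'$ was arbitrary, $\hat{\A}$ is a ${\rm Q}$-tensor. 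For the converse one observes that the construction ${\A}\mapsto\hat{\A}$ is, up to replacing ${\bf P}$ by the permutation matrix ${\bf P}^{-1}={\bf P}^{T}$, an involution: since $[{\bf P}^{T}({\bf P}{\w^{(j)}})]^{\bigotimes m}=[{\w^{(j)}}]^{\bigotimes m}$, we have ${\A}=\sum_{j=1}^{k}\mu_{j}[{\bf P}^{T}({\bf P}{\w^{(j)}})]^{\bigotimes m}$, i.e.\ ${\A}$ arises from $\hat{\A}$ by the same recipe with ${\bf P}$ replaced by ${\bf P}^{T}$. Hence the direction already proved (applied to $\hat{\A}$ and ${\bf P}^{T}$) yields the reverse implication, and the same remark also supplies the reverse implication in (i).

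\textbf{Main obstacle.} There is essentially no analytic obstacle here; the corollary is purely a consequence of Theorem~\ref{LCP} together with the invertibility of ${\bf P}$. The only point requiring a moment's care is making the argument symmetric in ${\A}$ and $\hat{\A}$ for the ``only if'' parts, which is handled by noting that ${\bf P}^{T}$ is again a permutation matrix and that ${\bf P}^{T}{\bf P}$ acts trivially on each generator ${\w^{(j)}}$.
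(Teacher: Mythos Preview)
Your proof is correct and follows precisely the route the paper intends: the paper presents this corollary without proof, simply as an immediate consequence of Theorem~\ref{LCP}, and your argument spells out exactly that consequence using the bijection ${\u}\mapsto{\bf P}{\u}$ and the invertibility of ${\bf P}$.
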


\begin{lemma} \rm \label{symrank2}
Let ${\A} \in {\mathrm{S}}(m,2)$ with $\mathop{Sym(\A)} =2$. Let ${\A} = {\mu_{1}}{\x}^{\bigotimes {m}} + {\mu_{2}}{\y}^{\bigotimes {m}}$ be the symmetric outer product decomposition of ${\A}$, where $\mu_{1}, \mu_{2} \in \mathbb{R}$ and ${\x},{\y} \in {\mathbb{R}}^{2}$. Then ${\x}$ and ${\y}$ are  linearly independent.
\end{lemma}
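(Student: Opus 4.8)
The plan is to prove the statement by contradiction: assume $\x$ and $\y$ are linearly dependent and show that this forces $\mathop{Sym(\A)} \le 1$, contradicting the hypothesis $\mathop{Sym(\A)} = 2$.

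First I would record a preliminary observation. Since the given decomposition $\A = \mu_{1}{\x}^{\bigotimes m} + \mu_{2}{\y}^{\bigotimes m}$ has exactly two terms and $\mathop{Sym(\A)} = 2$, it is necessarily a minimal one; hence $\mu_{1} \ne 0$, $\mu_{2} \ne 0$, ${\x} \ne {\bf 0}$ and ${\y} \ne {\bf 0}$. Indeed, if one of the scalars or one of the generators were zero, the corresponding summand would vanish and $\A$ would be expressible as a single scalar multiple of a rank one symmetric tensor, giving $\mathop{Sym(\A)} \le 1$, a contradiction. So from now on all four quantities may be assumed nonzero.

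Next I would exploit the linear dependence of the nonzero vectors ${\x},{\y} \in {\mathbb{R}}^{2}$: write ${\y} = t{\x}$ for some scalar $t \ne 0$ and substitute into the decomposition to get $\A = \mu_{1}{\x}^{\bigotimes m} + \mu_{2} t^{m}{\x}^{\bigotimes m} = (\mu_{1} + \mu_{2} t^{m})\,{\x}^{\bigotimes m}$. If $\mu_{1} + \mu_{2} t^{m} = 0$, then $\A$ is the zero tensor and $\mathop{Sym(\A)} = 0$; otherwise $\A$ is displayed as one real scalar times one rank one symmetric tensor, so $\mathop{Sym(\A)} \le 1$. In either case this contradicts $\mathop{Sym(\A)} = 2$, and the proof is complete. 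Note that neither Theorem \ref{LCP} nor Corollary \ref{not} is needed here; the claim is purely about the structure of the decomposition.

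I do not anticipate a genuine obstacle. The only point that needs a little care is the bookkeeping showing that a minimal decomposition cannot carry a zero coefficient or a zero generator, together with the fact that the definition of $\mathop{Sym}$ permits arbitrary real coefficients, so that two ``parallel'' rank one terms genuinely collapse into a single term; both facts are immediate from the definition of the symmetric rank.
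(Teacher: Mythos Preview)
Your argument is correct and is essentially the same contradiction argument the paper gives: assume ${\y}=\alpha{\x}$, collapse the two terms into $(\mu_1+\mu_2\alpha^m){\x}^{\bigotimes m}$, and conclude $\mathop{Sym(\A)}\le 1$. Your version is in fact slightly more careful in separately treating the case $\mu_1+\mu_2\alpha^m=0$, but otherwise the proofs are identical.
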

\begin{proof}
Let ${\A} \in {\mathrm{S}}(m,2)$  be such that $\mathop{Sym(\A)} =2$. Let ${\A} = {\mu_{1}}{\x}^{\bigotimes {m}} + {\mu_{2}}{\y}^{\bigotimes {m}}$ be the symmetric outer product decomposition of ${\A}$, where  $\mu_{1}, \mu_{2} \in \mathbb{R}$ and ${\x},{\y} \in {\mathbb{R}}^{2}$. Since $\mathop{Sym(\A)} =2$, therefore ${\x}$ and ${\y}$ must be non-zero as well as $\mu_{1}, \mu_{2}$ are non-zero. Assume contrary that ${\y} ={\alpha}{\x}$, for some non-zero ${\alpha}$. This implies ${\A} = {\beta}{\x}^{\bigotimes {m}}$, where $\beta= (\mu_{1}+\mu_{2}{\alpha}^{m})$ and hence $\mathop{Sym(\A)} = 1$, a contradiction. Therefore ${\x}$ and ${\y}$ must be linearly independent.  
\end{proof}

\begin{lemma}\rm\label{nonzero}
Let ${\A} \in {\mathrm{S}}(m,2)$ with $\mathop{Sym(\A)} =2$. Then any non-zero solution of $\T({\A},{\bf 0})$ has exactly one non-zero component.
\end{lemma}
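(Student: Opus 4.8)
The plan is to exploit the symmetric outer product decomposition $\A = \mu_{1}{\x}^{\bigotimes m} + \mu_{2}{\y}^{\bigotimes m}$ together with Lemma~\ref{symrank2}, which tells us that ${\x}$ and ${\y}$ are linearly independent in $\mathbb{R}^{2}$ and that $\mu_{1},\mu_{2}$ are non-zero. Since $n=2$, a non-zero solution ${\u}\geq{\bf 0}$ of $\T(\A,{\bf 0})$ has either one or two strictly positive components, so the entire content of the lemma is to rule out the possibility that ${\u}>{\bf 0}$.

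First I would argue by contradiction: suppose ${\u}=(u_{1},u_{2})>{\bf 0}$ is a solution of $\T(\A,{\bf 0})$. Put ${\w}=\A{\u}^{m-1}$, so that ${\w}\geq{\bf 0}$ and ${\u}^{T}{\w}=0$. Because every component of ${\u}$ is strictly positive, the complementarity condition ${\u}^{T}{\w}=0$ forces ${\w}={\bf 0}$, that is, $\A{\u}^{m-1}={\bf 0}$.

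Next I would expand $\A{\u}^{m-1}$ through the decomposition. Component-wise, $(\A{\u}^{m-1})_{i}=\mu_{1}x_{i}({\x}^{T}{\u})^{m-1}+\mu_{2}y_{i}({\y}^{T}{\u})^{m-1}$, hence
$$\A{\u}^{m-1}=\mu_{1}({\x}^{T}{\u})^{m-1}{\x}+\mu_{2}({\y}^{T}{\u})^{m-1}{\y}={\bf 0}.$$
Since $\{{\x},{\y}\}$ is linearly independent, both scalar coefficients must vanish; as $\mu_{1},\mu_{2}\neq 0$ and the map $t\mapsto t^{m-1}$ vanishes only at $t=0$, we obtain ${\x}^{T}{\u}={\y}^{T}{\u}=0$. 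But two linearly independent vectors in $\mathbb{R}^{2}$ form a basis, so ${\u}$ is orthogonal to a spanning set and therefore ${\u}={\bf 0}$, contradicting ${\u}>{\bf 0}$. Consequently no non-zero solution of $\T(\A,{\bf 0})$ can have two positive components, and since it must have at least one, it has exactly one non-zero component.

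I do not expect a genuine obstacle here; the only points needing a little care are the passage from $({\x}^{T}{\u})^{m-1}=0$ to ${\x}^{T}{\u}=0$ (valid since $m-1\geq 1$ and we are over $\mathbb{R}$) and the observation that linear independence of two vectors in $\mathbb{R}^{2}$ is the same as spanning $\mathbb{R}^{2}$. It is worth noting that the argument in fact proves the slightly stronger statement that $\T(\A,{\bf 0})$ has no solution with full support.
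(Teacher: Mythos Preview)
Your proof is correct and follows essentially the same approach as the paper: both argue by contradiction that ${\u}>{\bf 0}$ forces $\A{\u}^{m-1}={\bf 0}$, expand this via the symmetric outer product decomposition, and use the linear independence of ${\x},{\y}$ (twice) to conclude ${\u}={\bf 0}$. The only cosmetic difference is that the paper writes the system $\A{\u}^{m-1}={\bf 0}$ in matrix form before invoking linear independence, whereas you phrase it as a vanishing linear combination of ${\x}$ and ${\y}$.
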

\begin{proof}
Let ${\A} \in {\mathrm{S}}(m,2)$  with $\mathop{Sym({\A})}=2$. From Theorem \ref{decomposition}, it follows that there exists non-zero scalars $\mu_{1}, \mu_{2}$ in $\mathbb{R}$ and non-zero vectors ${\x},{\y}$ in ${\mathbb{R}}^{2}$ such that 
\begin{equation}
{\A} = {\mu_{1}}{\x}^{\bigotimes {m}} + {\mu_{2}}{\y}^{\bigotimes {m}}.
\end{equation}
From Lemma \ref{symrank2}, we get ${\x}$ and ${\y}$ are linearly independent. Let ${\u}=(u_{1},u_{2})^{T}$ be a non-zero solution to $\T({\A},{\bf 0})$. Assume contrary that $u_{1}> 0,~\text{and}~u_{2}>0$. This implies $({\A}{\u}^{m-1})_{1} = 0,~\text{and}~({\A}{\u}^{m-1})_{2} = 0$. From this, we get $\mu_{1}x_{1}({\x}^{T}{\u})^{m-1} + \mu_{2}y_{1}({\y}^{T}{\u})^{m-1}=0$ and $\mu_{1}x_{2}({\x}^{T}{\u})^{m-1} + \mu_{2}y_{2}({\y}^{T}{\u})^{m-1}=0$. This can be equivalently written as 
 $$\begin{bmatrix}
 \mu_{1}x_{1} & \mu_{2}y_{1}\\
 \mu_{1}x_{2} & \mu_{2}y_{2}
 \end{bmatrix} \begin{bmatrix}
 ({\x}^{T}{\u})^{m-1}\\
 ({\y}^{T}{\u})^{m-1} 
 \end{bmatrix} = \begin{bmatrix}
 0 \\
 0
 \end{bmatrix}.$$
 Since ${\x}$  and ${\y}$ are linearly independent, we get $({\x}^{T}{\u})^{m-1}=0$ and $({\y}^{T}{\u})^{m-1}=0$. This implies ${\x}^{T}{\u} =0$ and ${\y}^{T}{\u}=0$. Since ${\x}$  and ${\y}$ are linearly independent, we must have ${\u} = {\bf 0}$,  a contradiction. Therefore exactly one component of ${\u}$ is non-zero.  
\end{proof}

\begin{rem}\rm 
It is worth pointing out that if ${\A} \in {\mathrm{S}}(m,2)$ with $\mathop{Sym(\A)} =2$ and ${\A} = {\mu_{1}}{\x}^{\bigotimes {m}} + {\mu_{2}}{\y}^{\bigotimes {m}}$ is the symmetric outer product decomposition of ${\A}$, for some non-zero scalars $\mu_{1}, \mu_{2}$ and non-zero vectors ${\x},{\y}$ in ${\mathbb{R}}^{2}$, then depending upon the order of ${\A}$ is even or odd, the decomposition can always be reduced into the form
\begin{enumerate}
\item[\rm(i)] ${\A}= {\x}^{\bigotimes m} + {\y}^{\bigotimes m}$, if $m$ is odd.   
\item[\rm(ii)] ${\A}= {\x}^{\bigotimes m} + {\y}^{\bigotimes m}$ or ${\A}= {\x}^{\bigotimes m} - {\y}^{\bigotimes m}$ or ${\A}= -{\x}^{\bigotimes m} - {\y}^{\bigotimes m}$, when $m$ is even.
\end{enumerate}
For the sake of simplicity, in the subsequent discussions we always consider the decomposition of ${\A}$ into these forms.  
\end{rem}

\begin{lemma}\rm \label{notR}
Let $m$ be odd and ${\A} \in {\mathrm{S}}(m,2)$ with $\mathop{Sym(\A)} =2$. Suppose that ${\A} = {\x}^{\bigotimes m} + {\y}^{\bigotimes m}$ is the symmetric outer product decomposition of ${\A}$, for some ${\x}$ and ${\y}$ in $\mathbb{R}^{2}$, and denote ${\bf A} =\begin{bmatrix}
{\x}  &{\y}
\end{bmatrix} $. If ${\A}$ is not an ${\mathrm{R}_{0}}$-tensor, then ${\bf A}$ can take the following form
\begin{enumerate}
\item[\rm (i)]$\begin{bmatrix}
x_{1} & -x_{1} \\ 
x_{2} & y_{2}
\end{bmatrix}$, where $x_{1} \neq 0$ and $x_{2}+y_{2}>0$, or
\item[\rm (ii)] $\begin{bmatrix}
x_{1} & y_{1} \\ 
x_{2} & -x_{2}
\end{bmatrix}$, where $x_{2} \neq 0$ and $x_{1}+y_{1}>0$.
\end{enumerate} 
\end{lemma}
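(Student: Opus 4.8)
The plan is to run the argument backwards from the failure of the $\mathrm{R}_{0}$ property, with Lemmas~\ref{nonzero} and~\ref{symrank2} doing the real work. Since $\A$ is not an $\mathrm{R}_{0}$-tensor, $\T(\A,{\bf 0})$ has a non-zero solution $\u$, and by Lemma~\ref{nonzero} this $\u$ has exactly one non-zero, hence strictly positive, component. Thus there are two cases: $\u=(u_{1},0)^{T}$ with $u_{1}>0$, or $\u=(0,u_{2})^{T}$ with $u_{2}>0$. I would carry out the first case in detail; the second is obtained verbatim after interchanging the two coordinates and yields the matrix in~(ii). It is worth noting at the outset that the reduced decomposition $\A=\x^{\bigotimes m}+\y^{\bigotimes m}$ is available precisely because $m$ is odd, by the Remark preceding the statement.

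Assume $\u=(u_{1},0)^{T}$ with $u_{1}>0$. From the decomposition one computes $(\A\u^{m-1})_{i}=u_{1}^{m-1}\bigl(x_{i}x_{1}^{m-1}+y_{i}y_{1}^{m-1}\bigr)$ for $i=1,2$. The complementarity identity $\u^{T}(\A\u^{m-1})=0$ together with $u_{1}>0$ and $\A\u^{m-1}\ge{\bf 0}$ forces $(\A\u^{m-1})_{1}=0$, i.e. $x_{1}^{m}+y_{1}^{m}=0$; since $t\mapsto t^{m}$ is injective for odd $m$, this means $y_{1}=-x_{1}$. Substituting this into the surviving inequality $(\A\u^{m-1})_{2}\ge 0$ and using that $m-1$ is even, so $y_{1}^{m-1}=(-x_{1})^{m-1}=x_{1}^{m-1}$, we obtain $x_{1}^{m-1}(x_{2}+y_{2})\ge 0$.

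Two short applications of linear independence then finish the case. If $x_{1}=0$, then $y_{1}=-x_{1}=0$, so both $\x$ and $\y$ are scalar multiples of ${\bf e}^{(2)}$, contradicting Lemma~\ref{symrank2}; hence $x_{1}\ne 0$, so $x_{1}^{m-1}>0$ and the displayed inequality gives $x_{2}+y_{2}\ge 0$. If in addition $x_{2}+y_{2}=0$, then $y_{2}=-x_{2}$, which together with $y_{1}=-x_{1}$ gives $\y=-\x$, again contradicting Lemma~\ref{symrank2}; hence $x_{2}+y_{2}>0$. Therefore ${\bf A}=[\,\x\ \ \y\,]$ has exactly the form in~(i), with $x_{1}\ne 0$ and $x_{2}+y_{2}>0$, as claimed.

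I do not anticipate a genuine obstacle here: once Lemmas~\ref{nonzero} and~\ref{symrank2} are in hand the proof is a short computation. The two points that require attention are the parity bookkeeping --- using that $m-1$ is even to keep $x_{1}^{m-1},y_{1}^{m-1}$ non-negative and to collapse $y_{1}^{m-1}$ to $x_{1}^{m-1}$ after $y_{1}=-x_{1}$ --- and the sharpening of the weak inequalities $x_{1}^{m-1}\ge 0$ and $x_{2}+y_{2}\ge 0$ into the strict statements of the lemma, which is exactly where the hypothesis $\mathop{Sym(\A)}=2$ enters, through Lemma~\ref{symrank2}.
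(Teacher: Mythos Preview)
Your proposal is correct and follows essentially the same route as the paper: invoke Lemma~\ref{nonzero} to reduce to $\u=(u_{1},0)^{T}$ or $\u=(0,u_{2})^{T}$, use complementarity to force $x_{1}^{m}+y_{1}^{m}=0$ (hence $y_{1}=-x_{1}$ by oddness of $m$), then use the feasibility inequality together with Lemma~\ref{symrank2} to get $x_{1}\neq 0$ and $x_{2}+y_{2}>0$. Your argument for the strict inequalities is slightly more explicit than the paper's (you spell out that $x_{1}=0$ forces $\x,\y\in\mathrm{span}\{{\bf e}^{(2)}\}$ and that $x_{2}+y_{2}=0$ forces $\y=-\x$), but the logic is identical.
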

\begin{proof}
Given that $m$ is an odd number and ${\A} \in {\mathrm{S}}(m,2)$ with $\mathop{Sym(\A)} =2$. Let ${\A} = {\x}^{\bigotimes m} + {\y}^{\bigotimes m}$ be the symmetric outer product decomposition of ${\A}$, where ${\x}$ and ${\y}$ in $\mathbb{R}^{2}$. From Lemma \ref{symrank2}, we have ${\x}$ and ${\y}$ are linearly independent. 
 Suppose that $\A$ is not ${\mathrm {R}_{0}}$, so there exists a non-zero solution of $\T({\A},{\bf 0})$, say ${\u}$. This implies
 \begin{equation}\label{sol}
{\bf u} \geq {\bf 0},~{\A}{\u}^{m-1} \geq {\bf 0},~\text{and}~{\u}^{T}({\A}{\u}^{m-1}) = 0.
\end{equation}
  From Lemma \ref{nonzero}, it follows that ${\u}$  has exactly one non-zero component. We now consider the following cases: 
  
 (C1): Let ${\u}=({u_{1}},0)^{T}; u_{1}>0$. From Eq.(\ref{sol}), we get  $({\A}{\u}^{m-1})_{1} = 0$ and  $({\A}{\u}^{m-1})_{2} \geq 0$. This implies
 \begin{equation}
 \begin{aligned}
 x_{1}({\x}^{T}{\u})^{m-1} + y_{1}({\y}^{T}{\u})^{m-1}=0\implies 
 u_{1}^{m-1}(x_{1}^{m}+y_{1}^{m})=0.
 \end{aligned}
 \end{equation}
 Since $u_{1}>0$, therefore $x_{1}^{m}+y_{1}^{m} = 0$. Due to $m$ being odd, we get $y_{1} = -x_{1}$. This implies $x_{1} \neq 0$, because ${\x}$ and ${\y}$ are linearly independent.  Now $({\A}{\u}^{m-1})_{2} \geq {0}$ implies that $x_{2} ({\x}^{T}{\u})^{m-1} + y_{2}({\y}^{T}{\u})^{m-1} \geq {0}$. Since $u_{2}=0$, $y_{1} = -x_{1}$, and $m$ is odd,  this gives $u_{1}^{m-1} x_{1}^{m-1}(x_{2}+y_{2}) \geq 0$. Due to ${\x}$ and ${\y}$ being linearly independent, we get $x_{2}+y_{2} \neq 0$. Therefore $x_{2}+y_{2} > 0$, because $u_{1} > 0$ and $m$ is odd. Hence $${\bf A} = \begin{bmatrix}
x_{1} & -x_{1} \\ 
x_{2} & y_{2}
\end{bmatrix},~\text{where}~ x_{1} \neq 0~\text{and}~ x_{2}+y_{2}>0.$$

(C2): Let  ${\u}=(0,u_{2})^{T}; u_{2}>0$. Proceeding in the same way as (C1), it can be established that $${\bf A} = \begin{bmatrix}
x_{1} & y_{1} \\ 
x_{2} & -x_{2}
\end{bmatrix},~\text{where}~ x_{2} \neq 0~\text{and}~x_{1}+y_{1}>0.$$
Hence our conclusion follows.
\end{proof}

\begin{theorem} \rm\label{Q}
Let $m$ be odd and ${\A} \in {\mathrm{S}}(m,2)$ with $\mathop{Sym(\A)} =2$. If ${\A}$ is a ${\mathrm {Q}}$-tensor, then ${\A}$ is an ${\mathrm{R}_{0}}$-tensor.
\end{theorem}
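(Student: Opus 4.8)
The plan is to argue by contradiction: assume $\A$ is a $\mathrm{Q}$-tensor but \emph{not} an $\mathrm{R}_0$-tensor, and then produce a vector $\q$ for which $\T(\A,\q)$ has no solution, contradicting the $\mathrm{Q}$-property. Since $m$ is odd and $\mathop{Sym(\A)}=2$, write $\A = \x^{\bigotimes m} + \y^{\bigotimes m}$ with $\x,\y$ linearly independent (Lemma \ref{symrank2}). Because $\A$ is not $\mathrm{R}_0$, Lemma \ref{notR} applies, so up to the labelling of the two cases there (C1) and (C2) we may assume $\bf A = \begin{bmatrix} x_1 & -x_1 \\ x_2 & y_2 \end{bmatrix}$ with $x_1 \neq 0$ and $x_2 + y_2 > 0$; moreover, using Corollary \ref{not} (invariance of the $\mathrm{Q}$- and $\mathrm{R}_0$-properties under a permutation $\bf P$ acting simultaneously on all generators), we may further normalise the sign of $x_1$, say $x_1 > 0$, since swapping coordinates interchanges the two forms in Lemma \ref{notR}. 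The second case (C2) will be handled symmetrically (by the same coordinate swap).

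Next I would compute $\A\x^{m-1}$ explicitly in this normalised form. Writing $\A\u^{m-1}$ in the factored form used throughout — namely $(\A\u^{m-1})_i = x_i(\x^T\u)^{m-1} + y_i(\y^T\u)^{m-1}$ — one has for any $\u=(u_1,u_2)^T$ that $(\A\u^{m-1})_1 = (\A\u^{m-1})_2 \cdot(\text{something})$? No — rather, I would observe the key structural fact forced by $y_1 = -x_1$: namely $(\A\u^{m-1})_1 = x_1\big((\x^T\u)^{m-1} - (\y^T\u)^{m-1}\big)$, while $(\A\u^{m-1})_2 = x_2(\x^T\u)^{m-1} + y_2(\y^T\u)^{m-1}$. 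The plan is then to choose $\q = (q_1, q_2)^T$ with $q_1$ a suitably signed nonzero number and $q_2 < 0$ large in magnitude (or, if that does not suffice, $q_1 = 0$ and $q_2 < 0$, exploiting the degeneracy already exhibited in the Example preceding this theorem — there $\x=(1,2)$ gave $\A\x^2 = 0$ and the obstruction came precisely from a $\q$ aligned against that null direction). Concretely, one picks $\q$ so that \emph{every} candidate solution type is excluded: the zero vector fails when some $q_i<0$; a solution with both components positive forces $\A\u^{m-1}+\q=0$, i.e. $\A\u^{m-1} = -\q > 0$ componentwise, which one shows is impossible because the map $\u \mapsto \A\u^{m-1}$ on the open positive quadrant has a restricted range (the two coordinates are linked through the single pair of linear forms $\x^T\u, \y^T\u$, and by Lemma \ref{nonzero}-type reasoning $\A\u^{m-1}$ cannot be strictly positive for $\u>0$ — indeed $x_2+y_2>0$ with $y_1=-x_1$ pins down the sign behaviour); and a solution supported on a single coordinate $e^{(i)}$ forces $q_i=0$ together with a sign condition on $q_j$, which the choice of $\q$ violates. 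Assembling these exclusions gives a $\q$ with $\sol(\A,\q)=\emptyset$.

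The main obstacle I expect is the middle case: ruling out a solution $\u$ with both components strictly positive. For such a $\u$ the complementarity condition forces $\A\u^{m-1} = -\q$, so I must show that $-\q$ can be chosen outside the set $\{\A\u^{m-1} : \u > \bf 0\}$ while simultaneously keeping the other exclusions intact. This requires understanding the image of the positive quadrant under $\u\mapsto\A\u^{m-1}$; the leverage is that, with $y_1 = -x_1$, the first component $(\A\u^{m-1})_1 = x_1\big((\x^T\u)^{m-1}-(\y^T\u)^{m-1}\big)$ vanishes exactly when $|\x^T\u| = |\y^T\u|$ (using $m-1$ even), which together with $x_2+y_2>0$ forces a definite sign on $(\A\u^{m-1})_2$ along that locus, and a complementary sign analysis off it; this should confine the attainable $(\A\u^{m-1})_1, (\A\u^{m-1})_2$ enough to pick $\q$. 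Once the image is pinned down, selecting $\q$ is routine, and the contradiction with $\A$ being a $\mathrm{Q}$-tensor completes the proof; the (C2) case follows by the coordinate-swap symmetry furnished by Corollary \ref{not}.
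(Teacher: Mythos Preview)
Your overall architecture matches the paper's: argue the contrapositive, invoke Lemma~\ref{notR} to put $\bf A$ in the form $\begin{bmatrix} x_1 & -x_1 \\ x_2 & y_2\end{bmatrix}$ with $x_1\neq 0$, $x_2+y_2>0$, and then exhibit a $\q$ for which $\T(\A,\q)$ is unsolvable; the second form in Lemma~\ref{notR} is indeed reduced to the first via Corollary~\ref{not}. Where your proposal breaks down is in the actual choice of $\q$ and the analysis of the interior case $\u>\bf 0$.

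First, your fallback suggestion $q_1=0$, $q_2<0$ cannot work. With $u_2=0$ one has $(\A\u^{m-1})_1 = u_1^{m-1}\big(x_1^{m}+(-x_1)^{m}\big)=0$ (since $m$ is odd), so the first complementarity constraint is automatically satisfied; and $(\A\u^{m-1})_2 = u_1^{m-1}x_1^{m-1}(x_2+y_2)>0$, so for $u_1$ large the vector $\u=(u_1,0)$ \emph{does} solve $\T(\A,\q)$. Second, the assertion that ``$\A\u^{m-1}$ cannot be strictly positive for $\u>\bf 0$'' is false in general: for $\x=(1,1)$, $\y=(-1,1)$, $m=3$, $\u=(1,2)$ one gets $\A\u^{2}=(8,10)$. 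So the image-analysis you sketch for the $\u>\bf 0$ case does not yield the obstruction you claim. (Also, Corollary~\ref{not} concerns coordinate permutations and does not by itself let you normalise $x_1>0$; the paper simply treats $x_1>0$ and $x_1<0$ as two subcases.)

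The paper's key idea, which you are missing, is to reverse the signs you propose: take $\q=\big(-|x_1|,\,|x_2|+|y_2|\big)$, i.e.\ $q_1<0$ and $q_2>0$. The vanishing $(\A\u^{m-1})_1=0$ when $u_2=0$ now works \emph{for} you: it forces $(\A\u^{m-1}+\q)_1=q_1<0$, so any solution must have $u_2>0$. Then $u_1=0$ would give $(\A\u^{m-1}+\q)_2=u_2^{m-1}(x_2^{m}+y_2^{m})+q_2>0$ (using $x_2+y_2>0$, $m$ odd), violating complementarity; hence $\u>\bf 0$ and $\A\u^{m-1}=-\q$. Solving the resulting $2\times 2$ linear system for $\big((\x^T\u)^{m-1},(\y^T\u)^{m-1}\big)$ shows that one of these quantities is strictly negative, contradicting the fact that $m-1$ is even. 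That final algebraic step, not a range argument, is where the contradiction actually lives.
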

\begin{proof}
Given that $m$ is an odd number and ${\A} \in {\mathrm{S}}(m,2)$ with $\mathop{Sym(\A)} =2$. From Theorem \ref{decomposition}, there exists non-zero vectors ${\x},{\y}$ in ${\mathbb{R}}^{2}$ such that ${\A} = {\x}^{\bigotimes m} + {\y}^{\bigotimes m}$. From Lemma \ref{symrank2}, we get ${\x}$ and ${\y}$ are linearly independent. Let us denote ${\bf A} = \begin{bmatrix}
{\x} & {\y}
\end{bmatrix}$. We will prove the contrapositive. Suppose that ${\A}$ is not ${\mathrm{R}_{0}}$. From Lemma \ref{notR}, it follows that 
\begin{enumerate}
\item [\rm (i)] ${\bf A} = \begin{bmatrix}
x_{1} & -x_{1} \\ 
x_{2} & y_{2}
\end{bmatrix},~\text{where}~ x_{1} \neq 0~\text{and}~ x_{2}+y_{2}>0,~\text{or}$
\item[\rm(ii)]${\bf A} = \begin{bmatrix}
x_{1} & y_{1} \\ 
x_{2} & -x_{2}
\end{bmatrix},~\text{where}~x_{2} \neq 0~\text{and}~ x_{1}+y_{1}>0.$
\end{enumerate} 
Suppose that ${\bf A}$ has the form $\begin{bmatrix}
x_{1} & -x_{1} \\ 
x_{2} & y_{2}
\end{bmatrix}$, where $x_{1} \neq 0$ and $x_{2}+y_{2}>0$. Let ${\q} = \begin{bmatrix}
-\lvert x_{1} \rvert \\
\lvert x_{2} \rvert + \lvert y_{2} \rvert
\end{bmatrix}$ be a vector in ${\mathbb R}^{2}$. Let ${\u} = (u_{1},u_{2})^{T} \in \sol({\A},{\q})$, then we have
\begin{equation}\label{solution}
{\u} \geq {\bf 0},~{\A}{\u}^{m-1}+{\q} \geq {\bf 0}~\text{and}~{\u}^{T}({\A}{\u}^{m-1}+{\q}) = 0.
\end{equation}
We claim that $u_{2} > 0$. If not, then 
\begin{equation*}
\begin{aligned}
({\A}{\u}^{m-1}+{\q})_{1} &= x_{1}({\x}^{T}{\u})^{m-1} - x_{1}({\y}^{T}{\u})^{m-1} + q_{1}\\
&= x_{1}(x_{1}u_{1})^{m-1}-x_{1}(-x_{1}u_{1})^{m-1}-\lvert x_{1}\rvert\\
&=-\lvert x_{1}\rvert < 0,~\text{a contradiction}.
\end{aligned}
\end{equation*}
Therefore $u_{2}>0$. Using $u_{2} > 0$, we now prove that $u_{1}>0$. If not, suppose $u_{1} = 0$. Since $u_{2}>0$, from Eq.(\ref{solution}), we get $({\A}{\u}^{m-1}+{\q})_{2} = 0$. Observe that
\begin{equation*}
\begin{aligned}
({\A}{\u}^{m-1}+{\q})_{2} &= x_{2}({\x}^{T}{\u})^{m-1}+y_{2}({\y}^{T}{\u})^{m-1}+q_{2}\\
 &= x_{2}(x_{2} u_{2})^{m-1}+y_{2}(y_{2}u_{2})^{m-1}+ \lvert x_{2} \rvert + \lvert y_{2} \rvert\\
 &=  u_{2}^{m-1}(x_{2}^{m}+y_{2}^{m})+\lvert x_{2} \rvert + \lvert y_{2} \rvert.
\end{aligned}
\end{equation*}
Therefore $({\A}{\u}^{m-1}+{\q})_{2} = 0$ gives us 
\begin{equation}\label{positivity} 
 u_{2}^{m-1}(x_{2}^{m}+y_{2}^{m})+\lvert x_{2} \rvert + \lvert y_{2} \rvert = 0.
\end{equation} 
As $m$ is odd and $x_{2} + y_{2} > 0$, we get $x_{2}^{m}+y_{2}^{m} > 0$. Since each term in Eq.(\ref{positivity}) is positive, we get a contradiction. Hence $u_{1}>0$. Therefore ${\u} > {\bf 0}$. From Eq.(\ref{solution}), we get ${\A}{\u}^{m-1}+{\q} = {\bf 0}$. This can be written as 
$$\begin{bmatrix}
 x_{1} & -x_{1}\\
 x_{2} & y_{2}
 \end{bmatrix} \begin{bmatrix}
 ({\x}^{T}{\u})^{m-1}\\
 ({\y}^{T}{\u})^{m-1} 
 \end{bmatrix} = \begin{bmatrix}
 \lvert x_{1} \rvert \\
 -(\lvert x_{2} \rvert + \lvert y_{2} \rvert)
 \end{bmatrix}.$$
Since ${\x}$ and ${\y}$ are linearly independent, we have 
\begin{equation*}
\begin{bmatrix}
 ({\x}^{T}{\u})^{m-1}\\
 ({\y}^{T}{\u})^{m-1} 
 \end{bmatrix} = \frac{1}{x_{1}(x_{2}+y_{2})} \begin{bmatrix}
 y_{2} & x_{1} \\ 
 -x_{2} & x_{1}
\end{bmatrix}  \begin{bmatrix}
\lvert x_{1} \rvert \\
 -(\lvert x_{2} \rvert + \lvert y_{2} \rvert)
\end{bmatrix}.  
\end{equation*}
This implies  
\begin{equation}\label{z1}
\begin{bmatrix}
 ({\x}^{T}{\u})^{m-1}\\
 ({\y}^{T}{\u})^{m-1} 
 \end{bmatrix} = \frac{1}{x_{1}(x_{2}+y_{2})}\begin{bmatrix}
 y_{2}\lvert x_{1} \rvert -x_{1}(\lvert x_{2} \rvert + \lvert y_{2} \rvert) \\
 -x_{2}\lvert x_{1} \rvert-x_{1}(\lvert x_{2} \rvert + \lvert y_{2} \rvert)
 \end{bmatrix}.
\end{equation}
We now consider the following cases:\\
(C1): When $x_{1}>0$. From  Eq.(\ref{z1}), we have
\begin{equation}\label{z2}
\begin{bmatrix}
 ({\x}^{T}{\u})^{m-1}\\
 ({\y}^{T}{\u})^{m-1} 
 \end{bmatrix} = \frac{x_{1}}{x_{1}(x_{2}+y_{2})}  \begin{bmatrix}
 y_{2} - (\lvert x_{2} \rvert + \lvert y_{2} \rvert)\\
 -(x_{2}+\lvert x_{2} \rvert + \lvert y_{2} \rvert)
 \end{bmatrix}. 
\end{equation}
Since $x_{2}+y_{2}>0$, from Eq.(\ref{z2}), we get $({\y}^{T}{\u})^{m-1}<0$, which is a contradiction because $m$ is odd. \\
(C2): When $x_{1}<0$. From  Eq.(\ref{z1}), we have
\begin{equation}\label{z3}
\begin{bmatrix}
 ({\x}^{T}{\u})^{m-1}\\
 ({\y}^{T}{\u})^{m-1} 
 \end{bmatrix} = \frac{x_{1}}{x_{1}(x_{2}+y_{2})}  \begin{bmatrix}
-( y_{2} + (\lvert x_{2} \rvert + \lvert y_{2} \rvert)\\
 x_{2}-(\lvert x_{2} \rvert + \lvert y_{2} \rvert)
 \end{bmatrix}. 
\end{equation}
Since $x_{2}+y_{2}>0$, from Eq.(\ref{z3}), we get $({\x}^{T}{\u})^{m-1}<0$, which is a contradiction because $m$ is odd. From both of the above cases, we get  ${\A}$ is not a ${\rm Q}$-tensor. 

Now assume that $${\bf A} = \begin{bmatrix}
x_{1} & y_{1} \\ 
x_{2} & -x_{2}
\end{bmatrix},~\text{where}~x_{2} \neq 0~\text{and}~ x_{1}+y_{1}>0.$$
Let ${\bf P} = \begin{bmatrix}
0&1\\
1&0
\end{bmatrix}$. Then $$\hat{\bf A} =  {\bf P}{\bf A} = \begin{bmatrix}
{x_{2}}  &  -{x_{2}} \\
{x_{1}}  &  {y_{1}}
\end{bmatrix},~\text{where}~x_{2} \neq 0~\text{and}~ x_{1}+y_{1}>0.$$
From the above proof, it follows that ${\hat{\A}}$ is not ${\rm Q}$, where ${\hat{\A}} = {({\bf P}{\x})}^{\bigotimes m} + {({\bf P}{\y})}^{\bigotimes m}$.  From Corollary \ref{not}, we get ${\A}$ is not ${\rm Q}$.

In summary, we have proved that ${\A}$ is not a ${\rm Q}$-tensor. Hence proved.  
\end{proof}

We now prove that for an even ordered tensor ${\A}$ in ${\mathrm S}(m,2)$ with $\mathop {Sym (\A)} =2$, the implication ${\A}$ is a $\mathrm{Q}$-tensor then $\A$ is an ${\mathrm R}_{0}$-tensor, is true. Note that if ${\A} \in \mathrm{S}(m,2)$ is an even ordered tensor with $\mathop {Sym (\A)} =2$, then there exist non-zero vectors ${\x}$ and ${\y}$ in ${\mathbb R}^{2}$ such that either ${\A} = {\x}^{\bigotimes m} + {\y}^{\bigotimes m}$ or ${\A} = {\x}^{\bigotimes m} - {\y}^{\bigotimes m}$ or ${\A} = -[{\x}^{\bigotimes m} + {\y}^{\bigotimes m}]$. Also these vectors ${\x}$ and ${\y}$ are linearly independent. For the first case, when ${\A} = {\x}^{\bigotimes m} + {\y}^{\bigotimes m}$, it is immediate that $\A$ is a positive definite tensor (i.e. ${\u}^{T}{\A}{\u}^{m-1} > {0},\forall~{\u} \in {\mathbb R}^{n}$) and hence from \citep[][Theorem 5.3]{Palpandi}, $\T({\A},{\q})$ has a unique solution for each ${\q}$ in ${\R}$. Also note that ${\A}$ can not be a ${\rm Q}$-tensor when ${\A} = -[{\x}^{\bigotimes m} + {\y}^{\bigotimes m}]$, because for a vector ${\q} \in {\R}$ with $q_{i}<{0}$, for each $i \in [n]$, ${\T}({\A},{\q})$ does not have a solution. Therefore in the subsequent discussion, we prove our result only when $\A$ takes the form ${\A} = {\x}^{\bigotimes m} - {\y}^{\bigotimes m}$. To proceed further, we need the following lemma.
\begin{lemma}\rm \label{evenm}
Let $m$ be even and ${\A} \in {\mathrm{S}}(m,2)$ with $\mathop{Sym(\A)} =2$. Suppose that ${\A} = {\x}^{\bigotimes m} - {\y}^{\bigotimes m}$ is the symmetric outer product decomposition of ${\A}$, for some ${\x}$ and ${\y}$ in $\mathbb{R}^{2}$, and denote ${\bf A} =\begin{bmatrix}
{\x}  &{\y}
\end{bmatrix} $. If ${\A}$ is not an ${\mathrm{R}_{0}}$-tensor, then ${\bf A}$ can take any one of the following form, 
\begin{enumerate}
\item[\rm(i)] $\begin{bmatrix}
x_{1} &  \alpha x_{1} \\ 
x_{2} & y_{2}
\end{bmatrix}$ with  $x_{1} (x_{2} -\alpha y_{2}) > 0$, or

\item[\rm(ii)] $\begin{bmatrix}
x_{1} & y_{1} \\ 
x_{2} & \alpha x_{2}
\end{bmatrix}$ with $ x_{2}(x_{1} - \alpha y_{1})>0$, where ${\alpha} = \pm 1$.
\end{enumerate}
\end{lemma}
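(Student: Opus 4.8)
The plan is to mirror the argument of Lemma~\ref{notR}, adapting the parity computations to the even case. By Lemma~\ref{symrank2}, the vectors $\mathbf{x}$ and $\mathbf{y}$ in the decomposition $\mathcal{A} = \mathbf{x}^{\bigotimes m} - \mathbf{y}^{\bigotimes m}$ are linearly independent, and in particular neither is zero. Assume $\mathcal{A}$ is not $\mathrm{R}_0$; then there is a nonzero $\mathbf{u} = (u_1,u_2)^T \geq \mathbf{0}$ with $\mathcal{A}\mathbf{u}^{m-1} \geq \mathbf{0}$ and $\mathbf{u}^T(\mathcal{A}\mathbf{u}^{m-1}) = 0$. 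By Lemma~\ref{nonzero} (which applies since $\mathrm{Sym}(\mathcal{A})=2$), exactly one component of $\mathbf{u}$ is nonzero, which splits the argument into two symmetric cases.

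First I would treat the case $\mathbf{u} = (u_1,0)^T$ with $u_1 > 0$. The complementarity conditions give $(\mathcal{A}\mathbf{u}^{m-1})_1 = 0$ and $(\mathcal{A}\mathbf{u}^{m-1})_2 \geq 0$. Using $(\mathcal{A}\mathbf{u}^{m-1})_i = x_i(\mathbf{x}^T\mathbf{u})^{m-1} - y_i(\mathbf{y}^T\mathbf{u})^{m-1}$ together with $\mathbf{x}^T\mathbf{u} = x_1 u_1$ and $\mathbf{y}^T\mathbf{u} = y_1 u_1$, the first equation becomes $u_1^{m-1}(x_1^m - y_1^m) = 0$, hence $x_1^m = y_1^m$. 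Since $m$ is even this forces $y_1 = \alpha x_1$ with $\alpha = \pm 1$, and linear independence of $\mathbf{x},\mathbf{y}$ forces $x_1 \neq 0$ (otherwise both first components vanish). Substituting $y_1 = \alpha x_1$ into $(\mathcal{A}\mathbf{u}^{m-1})_2 \geq 0$ and using that $m-1$ is odd (so $\alpha^{m-1} = \alpha$), I obtain $(x_1 u_1)^{m-1}(x_2 - \alpha y_2) \geq 0$. Because $m-1$ is odd, $(x_1 u_1)^{m-1}$ has the sign of $x_1$ (as $u_1>0$), so this reads $x_1(x_2 - \alpha y_2) \geq 0$; linear independence rules out $x_2 - \alpha y_2 = 0$ (otherwise $\mathbf{y} = \alpha\mathbf{x}$), upgrading this to the strict inequality $x_1(x_2 - \alpha y_2) > 0$, which is exactly form (i).

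The case $\mathbf{u} = (0,u_2)^T$ with $u_2 > 0$ is handled identically after interchanging the roles of the two coordinates, yielding $y_2 = \alpha x_2$ and $x_2(x_1 - \alpha y_1) > 0$, which is form (ii). I do not expect a genuine obstacle here: the proof is a routine parity-bookkeeping variant of Lemma~\ref{notR}, the only delicate points being (a) tracking the distinct parities of $m$ and $m-1$ when simplifying powers of $\alpha x_1$ and $\alpha x_2$, and (b) invoking linear independence twice — once to get $x_1 \neq 0$ (resp.\ $x_2 \neq 0$) and once to pass from the non-strict inequality coming from $(\mathcal{A}\mathbf{u}^{m-1})_2 \geq 0$ to the strict one in the conclusion.
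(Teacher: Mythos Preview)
Your proposal is correct and follows essentially the same approach as the paper's proof: invoke Lemma~\ref{symrank2} for linear independence, use Lemma~\ref{nonzero} to reduce to a single nonzero coordinate, and then read off the constraints from $(\mathcal{A}\mathbf{u}^{m-1})_1=0$ and $(\mathcal{A}\mathbf{u}^{m-1})_2\geq 0$. The only cosmetic difference is that the paper splits into two sub-cases $y_1=x_1$ and $y_1=-x_1$, whereas you handle both at once via the parameter $\alpha$ and the observation $\alpha^{m-1}=\alpha$; the content is identical.
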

\begin{proof}
Let $m$ be an even number and ${\A} \in \mathrm{S}(m,2)$ with $\mathop{Sym(\A)} = 2$. Let ${\A} = {\x}^{\bigotimes m} - {\y}^{\bigotimes m}$ be the symmetric outer product decomposition of ${\A}$, where ${\x}$ and ${\y}$ in $\mathbb{R}^{2}$. From Lemma \ref{symrank2}, we have ${\x}$ and ${\y}$ are linearly independent. 
Let ${\bf A} =\begin{bmatrix}
{\x}  &{\y}
\end{bmatrix}$. 
 Suppose that $\A$ is not ${\mathrm {R}_{0}}$, so there exists a non-zero solution of $\T({\A},{\bf 0})$, say ${\u}$. From Lemma \ref{nonzero}, it follows that ${\u}$  has exactly one non-zero component. We now consider the following cases:
 
 (C1): Let ${\u}=({u_{1}},0)^{T}; u_{1}>0$. This implies $({\A}{\u}^{m-1})_{1} = 0$ and  $({\A}{\u}^{m-1})_{2} \geq 0$. Therefore we get 
 \begin{equation}
 \begin{aligned}
 x_{1}({\x}^{T}{\u})^{m-1} - y_{1}({\y}^{T}{\u})^{m-1}=0\implies 
 u_{1}^{m-1}(x_{1}^{m}-y_{1}^{m})=0,
 \end{aligned}
 \end{equation}
 Since $u_{1}>0$, therefore $x_{1}^{m}-y_{1}^{m} = 0$. Due to $m$ being even, we get $y_{1} = x_{1}$ or $y_{1} = -x_{1}$. We now consider the following two sub cases:
 \begin{enumerate}
 \item[(a):] When  $y_{1} = x_{1}$. Since $({\A}{\u}^{m-1})_{2} \geq 0$, therefore we get $(u_{1} x_{1})^{m-1} (x_{2} -y_{2}) \geq 0$. Due to $u_{1}>0$, ${\x}$ and ${\y}$ being linearly independent, and $m$ being even, we get  $x_{1}(x_{2}-y_{2}) > 0$.
 \item[(b):] When  $y_{1} = -x_{1}$. Since $({\A}{\u}^{m-1})_{2} \geq 0$, therefore we get $(u_{1} x_{1})^{m-1} (x_{2} +y_{2}) \geq 0$. Due to $u_{1}>0$, ${\x}$ and ${\y}$ being linearly independent, and $m$ being even, $x_{1}(x_{2}+y_{2}) > 0$.
 \end{enumerate} 
Therefore ${\bf A}$ has the form $\begin{bmatrix}
x_{1} &  \alpha x_{1} \\ 
x_{2} & y_{2}
\end{bmatrix}$ with  $x_{1} (x_{2} -\alpha y_{2}) > 0$, where $\alpha = \pm 1$.\\

 (C2): Let  ${\u}=(0,u_{2})^{T}; u_{2}>0$. Proceeding in the same way as (C1), we can obtain the required result.
\end{proof}

We now prove our main result.
\begin{theorem}\rm \label{main}
Let $m$ be even and ${\A} \in {\mathrm S}(m,2)$ with $\mathop {Sym(\A)} =2$. If ${\A}$ is a ${\mathrm Q}$-tensor, then ${\A}$ is an ${\mathrm R}_{0}$-tensor.
\end{theorem}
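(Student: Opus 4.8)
The plan is to mirror the odd-order argument (Theorem \ref{Q}) using the even-order structural lemma (Lemma \ref{evenm}) in place of Lemma \ref{notR}. As recorded in the discussion preceding Lemma \ref{evenm}, when $m$ is even and $\mathop{Sym(\A)}=2$ the decomposition reduces to one of three forms, and the cases ${\A}={\x}^{\bigotimes m}+{\y}^{\bigotimes m}$ (positive definite, hence $\T({\A},{\q})$ uniquely solvable for all ${\q}$, so trivially ${\rm R}_{0}$) and ${\A}=-[{\x}^{\bigotimes m}+{\y}^{\bigotimes m}]$ (not a ${\rm Q}$-tensor) are already settled. So I would reduce at once to ${\A}={\x}^{\bigotimes m}-{\y}^{\bigotimes m}$ with ${\x},{\y}$ linearly independent. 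I will prove the contrapositive: assuming ${\A}$ is \emph{not} ${\rm R}_{0}$, exhibit a vector ${\q}\in{\mathbb R}^{2}$ for which $\T({\A},{\q})$ has no solution, concluding ${\A}$ is not a ${\rm Q}$-tensor.

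By Corollary \ref{not}, applying the permutation ${\bf P}=\begin{bmatrix}0&1\\1&0\end{bmatrix}$ preserves both the ${\rm Q}$ and ${\rm R}_{0}$ properties, so by Lemma \ref{evenm} it suffices to treat the form ${\bf A}=\begin{bmatrix}x_{1}&\alpha x_{1}\\x_{2}&y_{2}\end{bmatrix}$ with $x_{1}(x_{2}-\alpha y_{2})>0$ and $\alpha=\pm1$; case (ii) of Lemma \ref{evenm} is converted to case (i) exactly as in the proof of Theorem \ref{Q}. For this form I would choose, in analogy with the odd case, a candidate right-hand side such as ${\q}=(-|x_{1}|,\,\varepsilon(|x_{2}|+|\alpha y_{2}|))^{T}$ with the sign $\varepsilon=\pm1$ chosen according to the sign of $x_{1}(x_{2}-\alpha y_{2})$, and then analyze $\T({\A},{\q})$. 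First I would show any solution ${\u}=(u_{1},u_{2})^{T}$ must have $u_{2}>0$: if $u_{2}=0$ then ${\bf P}^{T}{\u}$-type substitution into $({\A}{\u}^{m-1}+{\q})_{1}=x_{1}(x_{1}u_{1})^{m-1}-\alpha x_{1}(\alpha x_{1}u_{1})^{m-1}+q_{1}=-|x_{1}|<0$ (using $\alpha^{m}=1$ since $m$ is even), a contradiction. Then, using $u_{2}>0$, I would show $u_{1}>0$ by plugging $u_{1}=0$ into $({\A}{\u}^{m-1}+{\q})_{2}$ and obtaining a quantity of one strict sign, forcing it to be nonzero while complementarity forces it to be zero. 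Finally, with ${\u}>{\bf 0}$ complementarity gives ${\A}{\u}^{m-1}+{\q}={\bf 0}$; inverting the $2\times2$ system $\begin{bmatrix}x_{1}&-\alpha x_{1}\\x_{2}&y_{2}\end{bmatrix}\begin{bmatrix}({\x}^{T}{\u})^{m-1}\\({\y}^{T}{\u})^{m-1}\end{bmatrix}=-{\q}$ (nonsingular since ${\x},{\y}$ are independent) yields explicit values for $({\x}^{T}{\u})^{m-1}$ and $({\y}^{T}{\u})^{m-1}$, and I would check that for every sign combination of $x_{1}$ and of $x_{2}-\alpha y_{2}$ at least one of these two quantities comes out strictly negative — impossible for $m$ even. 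That final sign bookkeeping, split over the subcases $\alpha=1,\alpha=-1$ and the signs of $x_{1}$ and $x_{2}-\alpha y_{2}$, is where the real work lies.

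The main obstacle I anticipate is precisely this last sign-chase: unlike the odd case, where $({\,\cdot\,})^{m-1}\ge 0$ is automatic and only the linear combination $x_{2}^{m}+y_{2}^{m}$ needed a sign, here $m-1$ is odd so $({\x}^{T}{\u})^{m-1}$ and $({\y}^{T}{\u})^{m-1}$ can legitimately be negative, and I must instead derive the contradiction from the algebraic identity $({\x}^{T}{\u})^{m}=({\x}^{T}{\u})({\x}^{T}{\u})^{m-1}$ together with the constraint $({\A}{\u}^{m-1})_{i}=(\x^{T}\u)^{m-1}x_i-(\y^{T}\u)^{m-1}y_i$ and the sign of $u^{T}(\A\u^{m-1}+\q)$. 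A cleaner route, which I would try first, is to pick ${\q}$ so that the forced values of $({\x}^{T}{\u})^{m-1}$ and $({\y}^{T}{\u})^{m-1}$ have a product that is manifestly negative (so they have opposite signs) while simultaneously the constraint ${\u}\ge{\bf 0}$ combined with $x_{1}(x_{2}-\alpha y_{2})>0$ forces a definite sign on, say, ${\x}^{T}{\u}$; comparing with the odd-case choice ${\q}=(-|x_{1}|,|x_{2}|+|y_{2}|)^{T}$ suggests the analogous choice will work after adjusting one sign by $\alpha$. If a single ${\q}$ does not cover all subcases, I will allow ${\q}$ to depend on the subcase, which is harmless since we only need \emph{some} ${\q}$ with $\T({\A},{\q})=\emptyset$. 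Once this is in place, the structure ``not ${\rm R}_{0}$ $\Rightarrow$ not ${\rm Q}$'' is complete, and combining it with the two already-handled decomposition forms yields the theorem.
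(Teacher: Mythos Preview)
Your high-level reduction matches the paper exactly: reduce to ${\A}={\x}^{\bigotimes m}-{\y}^{\bigotimes m}$, argue by contrapositive, invoke Lemma \ref{evenm}, and dispatch form (ii) via the permutation in Corollary \ref{not}. The gap is in the technical core you yourself flag as ``where the real work lies,'' and the methods you propose do not close it. First, your step ``$u_{1}>0$'' needs $u_{2}^{m-1}(x_{2}^{m}-y_{2}^{m})+q_{2}$ to have a fixed sign when $u_{1}=0$, but $x_{2}^{m}-y_{2}^{m}$ can be positive, negative, or zero, and your $\varepsilon$ (chosen from the sign of $x_{1}(x_{2}-\alpha y_{2})$, which is \emph{always} positive by hypothesis) does not track this. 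Second, for ${\u}>{\bf 0}$ your plan to force a negative $(m-1)$-th power is, as you concede, no contradiction when $m-1$ is odd; and the fallback claim that ${\u}\ge{\bf 0}$ together with $x_{1}(x_{2}-\alpha y_{2})>0$ pins the sign of ${\x}^{T}{\u}$ is generally false (take $x_{1}>0$, $x_{2}<0$).

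The paper resolves this by a case split on $|x_{2}|$ versus $|y_{2}|$ that your plan is missing. When $|x_{2}|\ge|y_{2}|$ it argues that every entry satisfies $a_{i_{1}\cdots i_{m}}=x_{1}^{k}(x_{2}^{m-k}-\alpha^{k}y_{2}^{m-k})\ge0$ while the diagonal entry $a_{11\cdots1}=x_{1}^{m}-(\alpha x_{1})^{m}=0$; thus ${\A}$ is a nonnegative tensor with a zero diagonal entry and fails to be $\mathrm{Q}$ by \cite[Theorem 3.5]{Yisheng} --- an entirely different mechanism from anything in the odd case. When $|x_{2}|<|y_{2}|$ it takes ${\q}=(|x_{1}|,-t)^{T}$ with $t>0$ (the opposite sign pattern to yours), rules out boundary ${\u}$ using $x_{2}^{m}-y_{2}^{m}<0$, and for ${\u}>{\bf 0}$ inverts \emph{twice}: once for $\big(({\x}^{T}{\u})^{m-1},({\y}^{T}{\u})^{m-1}\big)$, then takes real $(m-1)$-th roots and inverts again to solve for $u_{2}$ explicitly, concluding $u_{2}<0$ from the strict monotonicity of $a\mapsto a^{1/(m-1)}$. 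The contradiction lands on the sign of $u_{2}$, not on the sign of a power.
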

\begin{proof}
Given that $m$ is an even number and ${\A} \in {\mathrm S}(m,2)$ with $\mathop {Sym (\A)} =2$. We will prove the contrapositive. Assume that ${\A}$ is not ${\mathrm R}_{0}$. So there exists two linearly independent vectors ${\x}$ and ${\y}$ in ${\mathbb R}^{2}$ such that 
$${\A} = {\x}^{\bigotimes m} - {\y}^{\bigotimes m}.$$
Let ${\bf A} =\begin{bmatrix}
{x_{1}}  &  {y_{1}} \\
{x_{2}}  &  {y_{2}}
\end{bmatrix}$.  From Lemma \ref{evenm}, it follows that 
\begin{enumerate}
\item[\rm(i)] $ {\bf A} = \begin{bmatrix}
x_{1} &  \alpha x_{1} \\ 
x_{2} & y_{2}
\end{bmatrix}~\text{with}~x_{1} (x_{2} -\alpha y_{2}) > 0,~\text{or}$ 
\item[\rm(ii)] $ {\bf A} = \begin{bmatrix}
x_{1} & y_{1} \\ 
x_{2} & \alpha x_{2}
\end{bmatrix}~\text{with}~x_{2}(x_{1} - \alpha y_{1})>0,~\text{where} ~\alpha = \pm 1.$
\end{enumerate}
Suppose that ${\bf A} =\begin{bmatrix}
x_{1} &  \alpha x_{1} \\ 
x_{2} & y_{2}
\end{bmatrix}$ and  $x_{1} (x_{2} -\alpha y_{2}) > 0$, where $\alpha = \pm 1$.
 We proceed by considering the following cases for $x_{2}$  and $y_{2}$.\\
(C1): When $\lvert x_{2} \rvert \geq \lvert y_{2} \rvert$. Then the entries of the tensor ${\A}$ are given as
\begin{equation}\label{nonnegative}
\begin{aligned}
a_{i_{1}i_{2}...i_{m}}&=x_{i_{1}} x_{i_{2}}...x_{i_{m}}-y_{i_{1}}y_{i_{2}}...y_{i_{m}}\\
&=  x_{1}^{k} x_{2}^{m-k} - (\alpha x_{1})^{k}y_{2}^{m-k}\\
&= x_{1}^{k} (x_{2}^{m-k} - (\alpha)^{k} y_{2}^{m-k}),~\text{for some}~k. 
\end{aligned}
\end{equation}
As we have  $x_{1}(x_{2} -\alpha y_{2})>0$. This implies $x_{1}^{k} (x_{2}^{m-k} - (\alpha )^{k} y_{2}^{m-k}) = x_{1}^{k} (x_{2}^{m-k} -\alpha y_{2}^{m-k}) > 0$, for all odd $k$. Also, when $k$ is even, we get $x_{1}^{k} (x_{2}^{m-k} - (\alpha )^{k} y_{2}^{m-k})=x_{1}^{k} (x_{2}^{m-k} - y_{2}^{m-k}) \geq 0$ because $\lvert x_{2} \rvert \geq \lvert y_{2} \rvert$. Therefore from Eq.(\ref{nonnegative}), we have $a_{i_{1}i_{2}...i_{m}} \geq 0$, for each $i_{1},i_{2},...,i_{m}$. So ${\A}$ is a nonnegative tensor with $a_{11...1} = x_{1}^{m} - (\alpha x_{1})^{m} =0$. By Theorem 3.5 in \cite{Yisheng}, it implies that ${\A}$ is not a ${\mathrm Q}$-tensor.\\
(C2): When $\lvert x_{2} \rvert < \lvert y_{2} \rvert$. Let ${\q} = \begin{bmatrix}
\lvert x_{1} \rvert \\
-t
\end{bmatrix}$, where $t>0$, be a vector in ${\mathbb R}^{2}$. Let ${\u} = (u_{1},u_{2})^{T} \in \sol({\A},{\q})$, then we have
\begin{equation}\label{solution2}
{\u} \geq {\bf 0},~{\A}{\u}^{m-1}+{\q} \geq {\bf 0}~\text{and}~{\u}^{T}({\A}{\u}^{m-1}+{\q}) = 0.
\end{equation}
Note that ${\u}$ can not be a zero vector as ${\q}$ has a negative component. Therefore we have the following possibilities for the vector ${\u} = (u_{1},u_{2})^{T}$.\\
(a): $u_{1} = 0$ and $u_{2} > 0$. From Eq.(\ref{solution2}), we get $({\A}{\u}^{m-1}+{\q})_{2} = 0$. This implies 
\begin{equation}\label{negative}
u_{2}^{m-1}(x_{2}^{m}-y_{2}^{m})-t = 0.
\end{equation}
As $\lvert x_{2} \rvert < \lvert y_{2} \rvert$ and $m$ is even, we get $(x_{2}^{m}-y_{2}^{m})<0$. So each term in Eq.(\ref{negative}) is negative, which is a contradiction.\\
(b): $u_{1}>0$, and $u_{2}=0$. From Eq.(\ref{solution2}), we get $({\A}{\u}^{m-1}+{\q})_{1} = 0$. This implies 
\begin{equation}\label{second}
u_{1}^{m-1}(x_{1}^{m}- (\alpha x_{1})^{m})+{\lvert x_{1} \rvert} = 0.
\end{equation}
As $m$ is even and $x_{1}$ is non-zero, we get a contradiction from Eq.(\ref{second}).\\
(c): $u_{1} >0$ and $u_{2}>0$. From Eq.(\ref{solution2}), we get ${\A}{\u}^{m-1}+{\q} = {\bf 0}$. This implies 
$$\begin{bmatrix}
 x_{1} & -\alpha x_{1}\\
 x_{2} & -y_{2}
 \end{bmatrix} \begin{bmatrix}
 ({\x}^{T}{\u})^{m-1}\\
 ({\y}^{T}{\u})^{m-1} 
 \end{bmatrix} = \begin{bmatrix}
 -\lvert x_{1} \rvert \\
 t
 \end{bmatrix}.$$
Since ${\x}$ and ${\y}$ are linearly independent, we have
$$\begin{bmatrix}
 ({\x}^{T}{\u})^{m-1}\\
 ({\y}^{T}{\u})^{m-1} 
 \end{bmatrix} = \frac{1}{x_{1}(-y_{2}+ \alpha x_{2})}\begin{bmatrix}
 -y_{2} & \alpha x_{1} \\
 -x_{2} & x_{1}
\end{bmatrix} \begin{bmatrix}
-\lvert x_{1} \rvert \\
t
\end{bmatrix} = \frac{1}{x_{1}(-y_{2} + \alpha x_{2})} \begin{bmatrix}
 y_{2}\lvert x_{1} \rvert + \alpha x_{1}t \\
 x_{2}\lvert x_{1} \rvert +x_{1}t
 \end{bmatrix}.$$
 As $m$ is even, this can be equivalently written as
 $$\begin{bmatrix}
 x_{1} &  x_{2} \\
 \alpha x_{1} & y_{2}
 \end{bmatrix} \begin{bmatrix}
 u_{1} \\ u_{2}
 \end{bmatrix} = \frac{1}{\{x_{1}(-y_{2}+ \alpha x_{2})\}^{\frac{1}{m-1}}} \begin{bmatrix}
 (y_{2}\lvert x_{1} \rvert + \alpha x_{1}t)^\frac{1}{m-1} \\
 (x_{2}\lvert x_{1} \rvert +x_{1}t)^\frac{1}{m-1} 
 \end{bmatrix}.$$
 Since ${\x}$ and ${\y}$ are linearly independent, upon simplifying further, we get
 \begin{equation*}
 \begin{bmatrix}
 u_{1}\\
 u_{2}
 \end{bmatrix} =  \frac{(-1)}{\{x_{1}(-y_{2} +\alpha x_{2})\}^{\frac{m}{m-1}}} \begin{bmatrix}
 y_{2} & -x_{2} \\
 -\alpha x_{1} & x_{1} 
 \end{bmatrix} \begin{bmatrix}
 (y_{2}\lvert x_{1} \rvert + \alpha x_{1}t)^\frac{1}{m-1} \\
 (x_{2}\lvert x_{1} \rvert +x_{1}t)^\frac{1}{m-1} 
 \end{bmatrix}
 \end{equation*}
 This implies
\begin{equation}\label{important}
\begin{bmatrix}
u_{1}\\
u_{2}
\end{bmatrix} = \frac{(-1)}{{\{x_{1}(x_{2}- \alpha y_{2})\}}^{\frac{m}{m-1}}} \begin{bmatrix}
 y_{2}(y_{2}\lvert x_{1} \rvert + \alpha x_{1}t)^{\frac{1}{m-1}} - x_{2}(x_{2}\lvert x_{1} \rvert +x_{1}t)^{\frac{1}{m-1}}\\
 -\alpha x_{1}(y_{2}\lvert x_{1} \rvert + \alpha x_{1}t)^{\frac{1}{m-1}} + x_{1}(x_{2}\lvert x_{1} \rvert +x_{1}t)^{\frac{1}{m-1}}
\end{bmatrix}. 
\end{equation}
Since we have $x_{1}(x_{2} -\alpha y_{2})>0$, therefore the denominator in Eq.(\ref{important}) is always positive. The following possibilities can be taken into consideration:\\
(i): If $x_{1}>0$, then  $x_{2}-\alpha y_{2}>0$. This implies $(x_{2}+t)> \alpha(y_{2}+ \alpha t)$. Also, from Eq.(\ref{important}), we have
\begin{equation}\label{lasteq}
 u_{2} = \frac{(-1)(x_{1}^\frac{m}{m-1})}{{\{x_{1}(x_{2} - \alpha y_{2})\}}^{\frac{m}{m-1}}} \{(x_{2}+t)^\frac{1}{m-1} - \alpha (y_{2} + \alpha t)^\frac{1}{m-1}\}.
 \end{equation}
(ii): If $x_{1}<0$, then  $x_{2}-\alpha y_{2}<0$. This implies $(t-x_{2}) > \alpha (\alpha t - y_{2})$. Also, from Eq.(\ref{important}), we have
\begin{equation}\label{secondlast}
u_{2} = \frac{(-1)(x_{1}^\frac{m}{m-1})}{{\{x_{1}(x_{2}-\alpha y_{2})\}}^{\frac{m}{m-1}}} \{(t-x_{2})^\frac{1}{m-1} - \alpha (\alpha t-y_{2})^\frac{1}{m-1}\}.
\end{equation}
As $m~(>1)$ is even, the function $f(a) = a^{\frac{1}{m-1}
}$ is a strictly increasing function, for all real numbers $a$. Hence from Eqs. (\ref{lasteq}) and (\ref{secondlast}), we get $u_{2}< 0$, a contradiction.
 Therefore we get contradiction in each of the above cases implying that $\A$ is not a ${\mathrm Q}$-tensor.  Now assume that $${\bf A} = \begin{bmatrix}
x_{1} & y_{1} \\ 
x_{2} & \alpha x_{2}
\end{bmatrix},~x_{2}(x_{1} - \alpha y_{1})>0,~\text{where}~ \alpha = \pm 1.$$ 
Then by the same procedure as in Theorem \ref{Q}, we can show that ${\A}$ is not a ${\rm Q}$-tensor. Hence our conclusion follows. 
\end{proof}

After combining Theorems \ref{Q} and \ref{main}, we have 
\begin{theorem}\rm \label{last}
Let ${\A} \in \mathrm{S}(m,2)$ and $\mathop{Sym}({\A})=2$. If ${\A}$ is a ${\rm Q}$-tensor, then ${\A}$ is an ${\rm R}_{0}$-tensor.
\end{theorem}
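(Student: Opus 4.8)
The plan is to deduce Theorem \ref{last} directly from the two main results already established, by splitting on the parity of the order $m$. Since $\A \in \mathrm{S}(m,2)$, $\mathop{Sym}(\A) = 2$, and $\A$ is assumed to be a ${\rm Q}$-tensor, exactly one of the following holds: $m$ is odd, or $m$ is even, and these two cases are exhaustive.

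First I would treat the case where $m$ is odd. Here the hypotheses of Theorem \ref{Q} are satisfied verbatim ($\A \in \mathrm{S}(m,2)$, $\mathop{Sym}(\A)=2$, and $\A$ a ${\rm Q}$-tensor), so Theorem \ref{Q} yields at once that $\A$ is an ${\rm R}_{0}$-tensor. Next I would treat the case where $m$ is even. Again the hypotheses of Theorem \ref{main} hold verbatim, and Theorem \ref{main} gives the same conclusion. Combining the two cases, $\A$ is an ${\rm R}_{0}$-tensor whenever it is a ${\rm Q}$-tensor, which is the assertion.

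The only thing to check is that Theorems \ref{Q} and \ref{main} are stated under precisely the hypotheses of Theorem \ref{last}, so that no further reduction is required at this stage; in particular the permutation-invariance furnished by Corollary \ref{not}, the single-nonzero-component property of Lemma \ref{nonzero}, and the normal forms of Lemmas \ref{notR} and \ref{evenm} have already been used inside the proofs of the two constituent theorems and need not be reinvoked. Consequently there is essentially no obstacle in the proof of Theorem \ref{last} itself: the entire difficulty of the result resides in the parity-specific Theorems \ref{Q} and \ref{main}, and Theorem \ref{last} is merely their consolidation.
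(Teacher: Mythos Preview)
Your proposal is correct and matches the paper's own approach exactly: the paper states Theorem \ref{last} simply as the combination of Theorems \ref{Q} and \ref{main}, with no additional argument. Your parity split and direct invocation of those two theorems is precisely what is intended.
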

The converse of the above theorem may not be true even in the case of $m =2$. For example, let ${\bf A} = - {\bf I} \in {\mathbb R^{2 \times 2}}$. So ${\bf A}$ is a symmetric matrix with its rank equal to $2$. It can be seen easily that ${\bf A}$ is an ${\rm R}_{0}$-matrix but ${\bf A}$ is not ${\rm Q}$ as the $\L({\bf A},{\q})$ does not have a solution, for ${\q}  =(-1,-1)^{T}$.\\

The following corollary is a by-product of Theorem \ref{last},
and a result in \cite[Proposition 5.5]{comon} which states that for ${\A} \in {\rm S}(m,n)$, if $\mathop {Sym}({\A}) = 1$ or $2$, then $rank({\A})= \mathop {Sym}({\A})$. 

\begin{corollary}\rm
Let ${\A} \in {\rm S}(m,2)$ and $rank({\A}) =2$. If ${\A}$ is a ${\rm Q}$-tensor, then ${\A}$ is an ${\rm R}_{0}$-tensor. 
\end{corollary}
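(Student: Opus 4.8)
The plan is to deduce this immediately from Theorem~\ref{last} once we translate the hypothesis on $rank({\A})$ into the corresponding statement about the symmetric rank $\mathop{Sym}({\A})$. First I would record the general inequality $rank({\A}) \le \mathop{Sym}({\A})$, which holds for every symmetric tensor because a symmetric outer product decomposition is in particular an ordinary rank-one decomposition; in particular $rank({\A}) = 2$ forces $\mathop{Sym}({\A}) \ge 2$, so ${\A}$ is not a rank-one symmetric tensor.

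Next I would invoke \cite[Proposition 5.5]{comon}, which asserts that for ${\A} \in {\rm S}(m,n)$ with $\mathop{Sym}({\A}) \in \{1,2\}$ one has $rank({\A}) = \mathop{Sym}({\A})$; equivalently, a symmetric tensor of rank $1$ or $2$ has symmetric rank equal to its rank. Applying this with $rank({\A}) = 2$ rules out $\mathop{Sym}({\A}) = 1$ (which would force $rank({\A}) = 1$) and pins down $\mathop{Sym}({\A}) = 2$. Combined with ${\A} \in {\rm S}(m,2)$ and the hypothesis that ${\A}$ is a ${\rm Q}$-tensor, Theorem~\ref{last} then applies verbatim and gives that ${\A}$ is an ${\rm R}_{0}$-tensor.

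I do not expect any genuine obstacle: the corollary is a routine by-product of Theorem~\ref{last}. The only point that deserves a careful sentence is the equivalence $rank({\A}) = 2 \iff \mathop{Sym}({\A}) = 2$ for symmetric tensors --- i.e.\ that a rank-two symmetric tensor never needs three or more symmetric rank-one summands --- which is exactly the content of \cite[Proposition 5.5]{comon} and is all that separates the hypothesis $rank({\A}) = 2$ from the hypothesis $\mathop{Sym}({\A}) = 2$ used in Theorem~\ref{last}.
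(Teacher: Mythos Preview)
Your proposal is correct and matches the paper's own argument: the corollary is obtained precisely by combining Theorem~\ref{last} with \cite[Proposition~5.5]{comon}, which identifies $rank({\A})$ and $\mathop{Sym}({\A})$ when either equals $1$ or $2$. The only minor point is that your word ``equivalently'' is doing a bit of work---the nontrivial direction you actually need (that $rank({\A})=2$ forces $\mathop{Sym}({\A})=2$, not $\ge 3$) is exactly the content of Comon et al.'s result rather than a formal consequence of the phrasing you quote---but since that is what \cite[Proposition~5.5]{comon} in fact proves, the argument goes through.
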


\section{Conclusion}
In this paper, we establish the equivalence of ${\rm Q}$-tensor, ${\rm S}$-tensor, positive tensor and ${\rm R}_{0}$-tensor, for the class of rank one symmetric tensors. Furthermore, we discuss the relationship between ${\rm Q}$-tensor and ${\rm R}_{0}$-tensor, for the class of symmetric tensors having symmetric rank  equal to $2$.

It is evident from the literature \cite{{Blekherman},{comon}} that the symmetric rank of a tensor can exceed its dimension, however sharp upper bounds are provided for the case of $n =2$. The question whether the ${\rm Q}$ implies ${\rm R}_{0}$ holds, when the symmetric rank is greater than equal to $3$, remain yet to be answered. This is an issue to be further studied.

\end{document}